\documentclass[11pt,a4paper,twoside]{article}
\usepackage[T1]{fontenc}
\usepackage[utf8]{inputenc}
\usepackage[english]{babel}
\usepackage{amssymb, amsmath, amsfonts, amsthm}
\usepackage{enumerate}
\usepackage{colonequals}
\usepackage{empheq,fancybox}
\usepackage[small]{titlesec}
\usepackage[noblocks]{authblk}

\usepackage{microtype}
\usepackage{ellipsis}
\usepackage[BCOR=20mm,DIV=11]{typearea}
 \newcommand{\hm}[1]{\leavevmode{\marginpar{\tiny%
 $ \hbox to 0mm{\hspace*{-0.5mm} $ \leftarrow $ \hss}%
 \vcenter{\vrule depth 0.1mm height 0.1mm width \the\marginparwidth}%
 \hbox to
 0mm{\hss $ \rightarrow $ \hspace*{-0.5mm}} $ \\\relax\raggedright #1}}}

\newcommand{\euler}{\mathrm{e}}
\newcommand{\drm}{\mathrm{d}}
\newcommand{\dvol}{\mathrm{dvol}}
\newcommand{\RR}{\mathbb{R}}

\newcommand{\NN}{\mathbb{N}}

\newcommand{\tvert}[1]{{\left\vert\kern-0.25ex\left\vert\kern-0.25ex\left\vert #1 
    \right\vert\kern-0.25ex\right\vert\kern-0.25ex\right\vert}}
\renewcommand{\epsilon}{\varepsilon}

\DeclareMathOperator{\diam}{\mathop{diam}}

\DeclareMathOperator{\Vol}{\mathop{Vol}}

\DeclareMathOperator{\Ker}{\mathop{Ker}}

\newtheorem{theorem}{Theorem}[section]

\newtheorem{proposition}[theorem]{Proposition}
\newtheorem{corollary}[theorem]{Corollary}
\theoremstyle{definition}
\newtheorem{definition}[theorem]{Definition}
\theoremstyle{remark}
\newtheorem{remark}[theorem]{Remark}

\usepackage{color}
	\definecolor{darkred}{rgb}{0.5,0,0}
	\definecolor{darkgreen}{rgb}{0,0.5,0}
	\definecolor{darkblue}{rgb}{0,0,0.5}
\begin{document}
\title{The Kato class on compact manifolds with integral bounds on the negative part of Ricci curvature}
\author{Christian Rose}
\author{Peter Stollmann}
\affil{Technische Universit\"at Chemnitz, Faculty of Mathematics, D - 09107 Chemnitz}
\date{\today}
\maketitle
\section{Introduction}
The starting point of the present note is a result by Elworthy and Rosenberg \cite{ElworthyRosenberg-91} who 
provided a variant of a classical result of Bochner that implies vanishing of the first real cohomology group 
$H^1(M)$ of a given compact Riemannian manifold. We refer to the following section for more details and 
summarize the ideas briefly. \\
Bochner's theorem says that $H^1(M)=\{0\}$ provided the Ricci curvature $R(\cdot,\cdot)$ is nonnegative and 
strictly positive somewhere; this is quite obvious from the Weitzenb\"ock formula
$$ \Delta^1=\nabla^\ast\nabla+R(\cdot,\cdot)\text,$$
where $\Delta^1$ denotes the Hodge Laplacian on 1-forms (note our sign convention!). Clearly, $\Delta^1$ will 
be positive definite under the assumptions in Bochner's theorem and so 
$$\Ker(\Delta^1)=H^1(M)=\{0\}\text.$$
The above mentioned result by Elworthy and Rosenberg deals with a somewhat different situation: suppose that 
$R(\cdot,\cdot)$ is positive mostly but allowed to take on negative values. Then we can still deduce that 
$H^1(M)=\{0\}$, provided wells of negative curvature are under control. Here, the conclusion is achieved by 
using semigroup domination, which allows us to deal with the Schr\"odinger operator
$$\Delta+\rho$$
defined on functions, where $\Delta$ is the Laplace Beltrami operator and 
$\rho(\cdot)=\inf\sigma(R(\cdot,\cdot))$ denote the lowest eigenvalue of the Ricci tensor, viewed as a section 
of endomorphisms of $\Lambda^1(M)$. Again we have to show that $\Delta+\rho>0$ (positive definite) and this 
looks like an easy question from the point of view of Schr\"odinger operators. Clearly, if $\rho\geq \rho_0>0$ 
mostly, a control of $(\rho-\rho_0)_{-}=:W$ will give the desired positivity. However, there is a serious catch 
here: we cannot simply treat $W$ as a perturbation, as both $\rho$ and $\Delta$ depend on the metric. The 
problem that we want to solve here is to give a rather explicit condition on $W$ that allows the above 
conclusion. Such a quantitative statement is not contained in \cite{ElworthyRosenberg-91}. \\
Our method is inspired by Schr\"odinger operator theory. Actually, our main result gives that $L^p$-means 
control the Kato condition uniformly for a whole family of Riemannian manifolds. The vanishing 
of $H^1(M)$ will then a rather easy consequence under explicit conditions on $W$. We note that 
\cite{ElworthyRosenberg-91} had been generalized to \cite{RosenbergYang-94}, who saw that integrability 
conditions are the right thing to look for. Actually, Gallot's paper \cite{Gallot-88} contains a positivity 
result for Schrödinger operators that can be regarded as a generalization of what is found in 
\cite{ElworthyRosenberg-91}. \\
Clearly, there are many ways to establish positivity of Schr\"odinger operators. Our main result, Theorem 
\ref{Katocondition} below, provides much more, namely a criterion for potentials to be in the Kato class.
Our second application uses more of the power of the Kato condition. The latter is quite useful in 
deriving mapping properties of semigroups and we exploit this feature in deriving ultracontractivity of the 
heat kernel of $\Delta^1$ which, in turn gives upper bounds on the dimension of $H^1(M)$. This is summarized 
in \ref{cor-dim} below.

\emph{The authors want to thank the referee for useful comments and Gilles Carron for pointing at the paper 
\cite{Aubry-07}. The second named author dedicates this work to the memory of Joe J. Perez: miss you buddy!}

\section{The setup}
Consider a compact manifold $M$ of dimension $d\geq 3$ and recall that $\rho$ denotes the smallest eigenvalue 
of the Ricci tensor $R(\cdot,\cdot)$. As shown in \cite{HessSchraderUhlenbrock-77, HessSchraderUhlenbrock-81}, 
the semigroup of the Hodge Laplacian is dominated by the semigroup of the Schr\"odinger operator $\Delta+\rho$ 
in the sense that 
\begin{align}\label{eq:domination}
\vert \euler^{-t\Delta^1}\omega\vert\leq \euler^{-t(\Delta+\rho)}\vert\omega\vert
\end{align}
pointwise on $M$.
\begin{remark}
If $\Delta+\rho>0$, then $H^1(M)=\{0\}$.
\end{remark}
This easy argument is shown in \cite{ElworthyRosenberg-91}, p. 474: for a harmonic 1-form $\omega$, the left 
hand side of \eqref{eq:domination} is equal to $\vert\omega\vert$ for all $t\ge 0$ while the right hand side 
tends to zero as $t\to\infty$, which gives $\omega=0$. \\
It is now clear, why we are dealing with positivity of Schr\"odinger operators in the sequel. Indeed, we go for 
more and introduce the Kato condition, following \cite{StollmannVoigt-96}; the case at hand is particularly 
easy since the potentials in our application are continuous and thus bounded. The corresponding concepts carry 
over to a large class of measures. 
\\
Since $\Delta$ generates a Dirichlet form, we can define, for $V\in L^\infty(M)$, $\alpha>0$:
$$c_{\rm{Kato}}(V,\alpha):=\Vert (\Delta+\alpha)^{-1}V\Vert_\infty\text.$$
\begin{proposition}
For $V\in L^\infty(M)$, $V\geq 0$, $\alpha>0$:
$$V\leq c_{\rm{Kato}}(V,\alpha)(\Delta+\alpha)$$
in the sense of quadratic forms.
\end{proposition}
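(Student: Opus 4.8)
The plan is to prove the form inequality $\int_M V u^2\,\dvol \le c_{\rm Kato}(V,\alpha)\,\mathcal{Q}_\alpha(u)$ for all $u$ in the form domain $W^{1,2}(M)$, where $\mathcal{Q}_\alpha(u):=\int_M|\nabla u|^2\,\dvol+\alpha\int_M u^2\,\dvol$, by using the resolvent $g:=(\Delta+\alpha)^{-1}V$ itself as a ``ground state'' and performing the substitution $u=gw$. Note first that, since $\Delta$ generates a Dirichlet form, $(\Delta+\alpha)^{-1}$ is positivity preserving, so $g\ge 0$, and by definition $\|g\|_\infty=c_{\rm Kato}(V,\alpha)=:c$. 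We may assume $V\not\equiv 0$, since otherwise $c=0$ and the statement is trivial. Some preliminary regularity is needed: because $V\in L^\infty(M)$ and $M$ is compact, elliptic regularity gives $g\in C^1(M)$, and positivity of the heat kernel gives $g>0$ everywhere, hence $\min_M g>0$. Consequently $w:=u/g$ lies in $W^{1,2}(M)$ whenever $u\in W^{1,2}(M)\cap L^\infty(M)$, and $g$ is bounded away from $0$ and $\infty$; by construction $(\Delta+\alpha)g=V$.

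The heart of the argument is a ground-state identity. For bounded $u\in W^{1,2}(M)$ and $w=u/g$, I would expand $|\nabla u|^2=w^2|\nabla g|^2+\tfrac12\nabla(g^2)\cdot\nabla(w^2)+g^2|\nabla w|^2$, integrate over $M$, rewrite the cross term via $\int_M\nabla(g^2)\cdot\nabla(w^2)\,\dvol=\langle\Delta(g^2),w^2\rangle$ together with $\Delta(g^2)=2g\Delta g-2|\nabla g|^2$, and collect terms using $(\Delta+\alpha)g=V$ to obtain
$$ \mathcal{Q}_\alpha(u)=\int_M g\,V\,w^2\,\dvol+\int_M g^2|\nabla w|^2\,\dvol. $$
Since the last integral is nonnegative and $g\,V\,w^2=Vu^2/g\ge Vu^2/c$ (using $0\le g\le c$ and $Vu^2\ge 0$), this immediately yields $\int_M Vu^2\,\dvol\le c\,\mathcal{Q}_\alpha(u)$ for all bounded $u\in W^{1,2}(M)$.

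To remove the boundedness assumption I would apply the inequality to the truncations $u_n:=(-n)\vee(u\wedge n)$, which converge to $u$ in $W^{1,2}(M)$, and pass to the limit: the left-hand sides converge by monotone convergence (here $V\ge 0$ is used), the right-hand sides by $W^{1,2}$-continuity of $\mathcal{Q}_\alpha$. This gives exactly $V\le c_{\rm Kato}(V,\alpha)(\Delta+\alpha)$ in the form sense. I do not expect any serious obstacle: the only points requiring care are the strict positivity and $C^1$-regularity of $g$ (needed so that $w=u/g$ is an admissible element of the form domain) and the truncation step, both of which are routine on a compact manifold with $V\in L^\infty$. The one genuinely creative step is recognizing that $g=(\Delta+\alpha)^{-1}V$ is the correct function to substitute, after which the identity does all the work.
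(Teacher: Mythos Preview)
Your argument is correct. The ground-state substitution (sometimes called the Jacobi or Doob transform) with $g=(\Delta+\alpha)^{-1}V$ produces the exact identity
\[
\mathcal{Q}_\alpha(u)=\int_M gVw^{2}\,\dvol+\int_M g^{2}|\nabla w|^{2}\,\dvol,
\]
and the inequality follows immediately from $0<g\le c_{\rm Kato}(V,\alpha)$. The regularity you invoke is available: from $V\in L^\infty(M)$ and compactness one gets $g\in W^{2,p}(M)$ for every $p<\infty$, hence $g\in C^{1,\gamma}$, so the product rule $\Delta(g^{2})=2g\Delta g-2|\nabla g|^{2}$ and the membership $g^{2}\in\mathrm{Dom}(\Delta)$ are justified; strict positivity of $g$ follows from strict positivity of the heat kernel on a connected compact manifold. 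The truncation step is routine (Fatou on the left, $W^{1,2}$-continuity on the right).

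This is, however, a genuinely different route from what the paper does. The paper gives no self-contained proof at all: it simply records that the inequality is implicit in Simon's survey and is a special case of Theorem~2.1 in Stollmann--Voigt. The arguments in those references are operator-theoretic rather than variational: one observes that the form bound is equivalent to $\|V^{1/2}(\Delta+\alpha)^{-1}V^{1/2}\|_{2,2}\le c$, and then uses positivity preservation together with duality/interpolation (or the fact that $V^{1/2}(\Delta+\alpha)^{-1}V^{1/2}$ and $(\Delta+\alpha)^{-1}M_V$ share their nonzero spectrum) to bound this $L^2$-operator norm by the $L^\infty$-operator norm of $(\Delta+\alpha)^{-1}M_V$, which is exactly $c_{\rm Kato}(V,\alpha)$. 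That route is slicker, works verbatim for measure perturbations of general Dirichlet forms, and needs no regularity of $g$. Your approach, by contrast, is more elementary and yields an \emph{identity} (with a nonnegative remainder $\int g^{2}|\nabla w|^{2}$), at the price of relying on the smooth compact setting for the strict positivity and $C^{1}$-regularity of $g$.
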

This can be found implicitly in p. 459, (2) in \cite{Simon-82}; it can be seen as a very special case of \cite{StollmannVoigt-96}, Theorem 2.1.
\begin{corollary}\label{positivityKato}
Let $W\in L^\infty(M)$, $W\geq 0$ and assume that $c_{\rm{Kato}}(W,\alpha)<1$ for some $\alpha>0$. 
Then $\Delta+\alpha-W>0$.
\end{corollary}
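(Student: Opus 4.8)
The plan is to read the conclusion straight off the preceding Proposition. Write $c:=c_{\rm Kato}(W,\alpha)$, so that by hypothesis $0\le c<1$ and, since $W\ge 0$ is bounded, the Proposition yields $W\le c\,(\Delta+\alpha)$ in the sense of quadratic forms. Because $W\in L^\infty(M)$, the form associated with $\Delta+\alpha-W$ has the same domain as the Dirichlet form of $\Delta$, and on that common form domain one simply estimates, for $u$ in it,
$$\langle(\Delta+\alpha-W)u,u\rangle=\langle(\Delta+\alpha)u,u\rangle-\langle Wu,u\rangle\ge(1-c)\,\langle(\Delta+\alpha)u,u\rangle.$$

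Next I would use $\Delta\ge 0$, so that $\langle(\Delta+\alpha)u,u\rangle\ge\alpha\|u\|_2^2$, giving
$$\langle(\Delta+\alpha-W)u,u\rangle\ge(1-c)\alpha\,\|u\|_2^2 .$$
Since $1-c>0$, this says $\Delta+\alpha-W\ge(1-c)\alpha>0$; as $M$ is compact the spectrum is discrete, so this lower bound on the quadratic form is a genuine spectral gap and $\Delta+\alpha-W$ is positive definite. In particular its kernel is trivial, which is exactly what is needed to feed the semigroup domination \eqref{eq:domination} into the Bochner-type vanishing argument for $H^1(M)$.

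There is essentially no obstacle here: the corollary is a one-line consequence of the Proposition. The only points worth a second glance are that $c<1$ is what makes $1-c$ strictly positive, and that subtracting the bounded potential $W$ does not enlarge the form domain — both immediate. One could even settle for positivity in the weaker sense $\Ker(\Delta+\alpha-W)=\{0\}$ and invoke discreteness of the spectrum on the compact $M$, but the explicit bound $(1-c)\alpha$ is obtained for free and is the cleaner statement.
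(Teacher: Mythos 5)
Your argument is correct and is exactly the intended one: the paper gives no separate proof of this corollary, treating it as an immediate consequence of the preceding Proposition via $W\le c\,(\Delta+\alpha)$ and hence $\Delta+\alpha-W\ge(1-c)(\Delta+\alpha)\ge(1-c)\alpha>0$. (The aside about discreteness of the spectrum is unnecessary — the form lower bound already gives positive definiteness — but harmless.)
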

In the application we have in mind, $\rho$ is mostly positive in the sense that, for some $\rho_0>0$,
$$W:=(\rho-\rho_0)_-$$
is small in an appropriate sense. With the previous remark, we get positivity of $\Delta+\rho$, provided $c_{\rm{Kato}}(W,\rho_0)<1.$ However, this is rather implicit, in particular because $\rho$ and $\Delta$ both depend on the Riemannian structure.
\begin{remark}
\begin{enumerate}[(i)]
\item
The Kato class was introduced in \cite{Kato-72} and popularized in \cite{Aizenman-Simon-82,Simon-82} in particular, originally for the Laplacian on $\RR^n$. A potential $V$ is in the Kato class provided $c_{\rm{Kato}}(V,\alpha)\to 0$ for $\alpha\to\infty$, where the original condition is phrased in terms of truncated Greens kernels. For the equivalence, see \cite{Voigt-86}.
\item
In \cite{StollmannVoigt-96}, the Kato class has been extended to measures in a framework, where the Laplacian is generalized to the generator $H$ of a regular Dirichlet form. Here the extended Kato class consists of those functions (resp. measures) for which $c_{\rm{Kato}}(V,\alpha)<1$ for $\alpha$ large enough. It is shown that many important properties carry over from $H$ to $H-V$, especially mapping properties of the semigroup.
\item
Close to our results below is the discussion in \cite{Kuwae-Takahashi-07}, where a general setting is considered.
\item 
We also point out that \cite{Batu-14} contains $L^p$-conditions for the Kato class that are similar to ours, with the main difference that a different class of manifolds is considered.
\end{enumerate}
\end{remark}
\section{Gallot's isoperimetric inequality and heat kernel estimates}
In \cite{Gallot-88}, Gallot proves an isoperimetric inequality in terms of the control of certain $L^p$-means of the negative part of Ricci curvature and derives heat kernel estimates. The latter are fundamental for our treatment here, so we recall them for the convenience of the reader.
\begin{theorem}\label{Gallot}
Let $D>0$ and $\delta>d\geq 3$. For any compact Riemannian manifold $(M,g)$ with $\dim M=d$, $\diam M\leq D$ and Ricci curvature satisfying, for some $\lambda>0$
\begin{align}\label{curvaturecondition}
\frac 1{\Vol(M)}\int \left(\frac{\rho_-}{d-1}-\lambda^2\right)_+^{\frac{\delta}{2}}\dvol\leq \frac 12\left(\frac{\lambda^\delta}{\euler^{\lambda B(\delta,d)D}-1}\right),
\end{align}
where
\[
B(\delta,d)=\left(\frac{2(\delta-1)}{\delta}\right)^{\frac12}(d-1)^{1-1/\delta}\left(\frac{\delta-2}{\delta-d}\right)^{\frac12-\frac{1}{\delta}}\text,
\]
the heat kernel $k_{(M,g)}(t,x,y)=:k(t,x,y)$ can be bounded from above by
\[
k(t,x,y)\leq \frac{1+K'(\delta)\gamma(\lambda,\delta,D,d)^{-\delta/2}}{\Vol(M)}t^{-\delta/2},\quad 0<t\leq 1\text,
\]
where
\[
\gamma(\lambda,\delta,D,d)=B(\delta,d)\lambda\inf\left\{2^{-1/(\delta-1)}, \frac 14\frac 1{\euler^{\lambda B(\delta,d)D}-1}\right\}\text,
\]
and an explicitly computable $K'(\delta)$ depending on $\delta$ only.
\end{theorem}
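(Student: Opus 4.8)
The theorem is Gallot's, and what follows is the route I would take to establish it. The plan is to split the argument into two parts: (i) reduce the pointwise heat-kernel bound to a Sobolev inequality of effective dimension $\delta$ with \emph{explicit} constants, and (ii) derive that Sobolev inequality from the integral curvature hypothesis \eqref{curvaturecondition}. For (i) I would use the Varopoulos--Nash--Moser machinery: an inequality of the type $\|f\|_{2\delta/(\delta-2)}^{2}\le A\bigl(\|\nabla f\|_2^{2}+\Vol(M)^{-2/\delta}\|f\|_2^{2}\bigr)$ on $(M,g)$ yields, by Nash's argument and Moser iteration, an on-diagonal bound $k(t,x,x)\le C(\delta)A^{\delta/2}\bigl(t^{-\delta/2}+\Vol(M)^{-1}\bigr)$; since $M$ is compact we have $\lambda_0=0$ with constant ground state $\Vol(M)^{-1/2}$, so $k(t,x,y)=\Vol(M)^{-1}+\sum_{j\ge1}\euler^{-\lambda_j t}\phi_j(x)\phi_j(y)$ and for $0<t\le1$ the whole thing collapses to the advertised form $k(t,x,y)\le\bigl(1+K'(\delta)\gamma^{-\delta/2}\bigr)\Vol(M)^{-1}t^{-\delta/2}$ once $A$ is shown to carry the stated dependence on $(\lambda,\delta,D,d)$. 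So the real content is a quantitative Sobolev inequality.

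For (ii) I would go through the isoperimetric inequality: by Federer--Fleming/Maz'ya, a Sobolev inequality of dimension $\delta$ is equivalent to $\Vol(\partial\Omega)\ge c\,\min\{\Vol(\Omega),\Vol(M\setminus\Omega)\}^{(\delta-1)/\delta}$ with $c$ comparable to $A^{-1/2}$, so it suffices to bound the isoperimetric constant from below. This is where Gallot's volume-comparison enters: writing the volume element in geodesic polar coordinates, its Jacobian $\theta(r,v)$ satisfies the Riccati-type differential inequality $(\log\theta)''+\frac{1}{d-1}\bigl((\log\theta)'\bigr)^{2}+\Ric(\partial_r,\partial_r)\le0$, whose only non-model term is the Ricci curvature along the geodesic. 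Splitting $\rho_-=(d-1)\bigl(\rho_-/(d-1)-\lambda^2\bigr)_+ + \min\{\rho_-,(d-1)\lambda^2\}$, the bounded part merely shifts the model to the constant-curvature $-\lambda^2$ comparison (this is the source of the $\euler^{\lambda B(\delta,d)D}$ factors, obtained by solving the model ODE on a segment of length $\le D$), while the unbounded part is controlled, via Hölder against the Jacobian together with the diameter bound, by the global $L^{\delta/2}$-quantity appearing in \eqref{curvaturecondition}; the smallness imposed there is exactly what keeps the perturbed comparison function positive on $[0,D]$. Bishop--Gromov-type integration of $\theta$ then gives a two-sided volume comparison, hence a lower bound on the isoperimetric constant, hence the Sobolev inequality with the constants packaged into $B(\delta,d)$ and $\gamma(\lambda,\delta,D,d)$. (An alternative for this step is to apply the Bochner formula to a Dirichlet torsion function $u$, $\Delta u=1$ on $\Omega$, and absorb the term $\int_\Omega\rho_-|\nabla u|^{2}$ by Hölder plus re-insertion of the Sobolev inequality --- a bootstrap that closes precisely because $\delta>d$.)

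The step I expect to be laborious, rather than conceptually hard, is the explicit and \emph{uniform} constant-tracking. In particular: converting the single global normalized $L^{\delta/2}$-bound on $(\rho_-/(d-1)-\lambda^2)_+$ into effective control of the curvature integrals appearing along individual geodesics --- the integral-geometric heart of Gallot's estimate --- and then propagating this through the Riccati comparison, the isoperimetric-to-Sobolev passage, and finally the Nash--Moser iteration, all while producing closed-form constants valid for the entire family $\{(M,g):\dim M=d,\ \diam M\le D\}$, is where the work lies. A secondary technical point is to be sure the Sobolev inequality is obtained with the \emph{effective} dimension $\delta$ (not $d$): it is precisely the slack $\delta-d>0$ that makes the error-absorption (or the bootstrap in the alternative route) quantitative, and that is also why the hypothesis \eqref{curvaturecondition} is phrased with the exponent $\delta/2$ rather than $d/2$. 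Once these constants are in hand, everything else is routine.
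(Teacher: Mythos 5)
The paper does not prove this theorem at all: it is imported from Gallot \cite{Gallot-88} (Th\'eor\`eme 6, p.~203), and the only argument supplied in the text is the two-line repackaging
\[
\Vol(M)\,k(t,x,y)\le k^*(\gamma t)\le 1+(C(\delta)\gamma t)^{-\delta/2}\le \bigl(1+K'(\delta)\gamma^{-\delta/2}\bigr)t^{-\delta/2}\quad (0<t\le 1),
\]
using $1\le t^{-\delta/2}$ for $t\le 1$. So the relevant comparison is with Gallot's original argument, which you are reconstructing. Your outline does capture its architecture correctly: integral curvature hypothesis $\Rightarrow$ isoperimetric inequality (via the Riccati/Heintze--Karcher comparison with the curvature excess split at level $(d-1)\lambda^2$ and the unbounded part controlled by the global $L^{\delta/2}$-mean) $\Rightarrow$ heat kernel bound. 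That is the right skeleton, and your remark that the slack $\delta-d>0$ is what makes the absorption quantitative is on target.

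As a proof of \emph{this} statement, however, there is a genuine gap, in two places. First, the entire content of the theorem is the explicit, family-uniform constants $B(\delta,d)$, $\gamma(\lambda,\delta,D,d)$ and $K'(\delta)$, and the precise threshold $\tfrac12\lambda^\delta/(\euler^{\lambda B(\delta,d)D}-1)$ in \eqref{curvaturecondition}; these are exactly what you set aside as ``laborious but routine'' constant-tracking. Deferring them means deferring the theorem. Second, your final step --- Federer--Fleming to a Sobolev inequality, then Nash/Moser iteration --- is not how Gallot passes from the isoperimetric inequality to the heat kernel: he uses symmetrization (\`a la B\'erard--Gallot), comparing $k$ with the heat kernel $k^*$ of a one-dimensional model whose eigenvalues are zeros of Bessel functions; that is precisely where the stated $K'(\delta)$ comes from, as the paper itself notes. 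A Nash--Moser route would yield a bound of the same \emph{shape} $C t^{-\delta/2}/\Vol(M)$, but with different and unspecified constants, i.e.\ a variant of the theorem rather than the theorem with the constants as stated --- and since the whole point of this paper is uniform, explicit constants, that distinction matters here.
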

Note that since $M$ is compact 
condition \eqref{curvaturecondition} will be satisfied for $\lambda$ large enough.
The above theorem is part of Theorem 6, p. 203 in the above mentioned article by Gallot. 
Actually, it is stated that
\[
\Vol (M)k(t,x,y)\leq k^*(\gamma(\lambda,\delta,D,d)t)
\]
where
\[
k^*(t)\leq 1+(C(\delta)t)^{-\delta/2},
\]
and the constant $C(\delta)$ involves Bessel functions and their zeroes, which in turn yields the constant 
$K'(\delta)$.\\
Since uniformity is the big issue here, we will keep the convention to indicate the dependence of constants on 
the relevant parameters.

Roughly speaking, the $d/2+\epsilon$-integrability of $\rho_-$ up to some level $\lambda^2$ 
(recall that $\rho(\cdot)$ denotes the lowest eigenvalue of Ricci curvature $R(\cdot,\cdot)$) controls the heat 
kernel in pretty much a euclidean way; however, an effective dimension $\delta>d$ appears. Note that 
$\gamma(\cdots)$ is decreasing in $\lambda$, as is the left hand side of \eqref{curvaturecondition}, while 
the right hand side of \eqref{curvaturecondition} goes to zero as $\lambda\to0$ as well as $\lambda\to\infty$. 
We use this effect to derive a somewhat weaker statement that does not involve the level parameter $\lambda$ 
anymore. First, let us fix some notions and notation.
\begin{definition}
If $(M,g)$ satisfies the assumptions of the previous theorem we write $M\in\mathcal{M}(\lambda,\delta,D,d)$ 
and set
\[
K(\lambda,\delta,D,d):=K'(\delta)\gamma(\lambda,\delta,D,d)^{-\frac{\delta}{2}}\text.
\]
\end{definition}
To simplify notation further, we write $\tvert{\cdot}_p$ for $L^p$-means, i.~e.
\[
\tvert{f}_p:=\left(\frac 1{\Vol(M)}\int_M\vert f(x)\vert^p\dvol(x)\right)^{\frac{1}{p}}=\Vol(M)^{-\frac{1}{p}}\Vert f\Vert_p\text.
\]
\begin{corollary}\label{weakassumption}
Let $D>0$ and $\delta>d\geq 3$. For any compact Riemannian manifold $(M,g)$ with $\dim M=d$, $\diam M\leq D$ 
and Ricci curvature satisfying 
\[
\tvert{\rho_-}_{\delta/2}\leq (d-1)\left(2(\euler^{\delta-1}-1)\right)^{-\frac{2}{\delta}}\left(\frac{\delta-1}{B(\delta,d)D}\right)^2\text,
\]
where $B(\delta,d)$ is as in Theorem \ref{Gallot}, the heat kernel can be estimated by
\begin{align}\label{heatkernelweak}
k(t,x,y)\leq \frac{1+K(\delta)D^{\frac{\delta}{2}}}{\Vol(M)}t^{-\frac{\delta}{2}},\quad 0<t\leq 1\text.
\end{align}
\end{corollary}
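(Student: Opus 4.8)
The plan is to derive Corollary \ref{weakassumption} from Theorem \ref{Gallot} by choosing the free level parameter $\lambda$ in a way that simultaneously (a) makes the curvature hypothesis \eqref{curvaturecondition} of Theorem \ref{Gallot} follow from the $L^{\delta/2}$-mean bound on $\rho_-$ that is assumed here, and (b) gives a clean, $\lambda$-free bound on $\gamma(\lambda,\delta,D,d)$, hence on $K(\lambda,\delta,D,d)$. The natural choice, suggested by the shape of the infimum defining $\gamma$ and of the right-hand side of \eqref{curvaturecondition}, is to pick $\lambda$ so that $\lambda B(\delta,d)D$ equals a fixed absolute multiple of $\delta$, e.g. $\lambda B(\delta,d)D = \delta - 1$; this makes $\euler^{\lambda B(\delta,d)D}-1 = \euler^{\delta-1}-1$ depend only on $\delta$, and it forces $\lambda = \frac{\delta-1}{B(\delta,d)D}$, which is precisely the value for which the hypothesized mean bound reads $\tvert{\rho_-}_{\delta/2}\leq (d-1)\lambda^2\bigl(2(\euler^{\delta-1}-1)\bigr)^{-2/\delta}$.

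First I would record the elementary inequality
\[
\int\left(\frac{\rho_-}{d-1}-\lambda^2\right)_+^{\delta/2}\dvol\leq \frac{1}{(d-1)^{\delta/2}}\int \rho_-^{\delta/2}\,\dvol = \frac{\Vol(M)}{(d-1)^{\delta/2}}\,\tvert{\rho_-}_{\delta/2}^{\delta/2},
\]
so that dividing by $\Vol(M)$ and plugging in the assumed mean bound on $\rho_-$ shows the left side of \eqref{curvaturecondition} is at most $\frac12\,\lambda^\delta\bigl(2(\euler^{\delta-1}-1)\bigr)^{-1}\cdot 2 = \lambda^\delta/(2(\euler^{\delta-1}-1))$ — wait, more carefully: the mean bound gives $\tvert{\rho_-}_{\delta/2}^{\delta/2} \le (d-1)^{\delta/2}\lambda^\delta\bigl(2(\euler^{\delta-1}-1)\bigr)^{-1}$, so the whole left side is $\le \lambda^\delta/(2(\euler^{\delta-1}-1))$, which is exactly the right side of \eqref{curvaturecondition} for our choice of $\lambda$ (since there $\euler^{\lambda B(\delta,d)D}-1=\euler^{\delta-1}-1$). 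Thus Theorem \ref{Gallot} applies with this $\lambda$.

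Second I would estimate $\gamma$. With $\lambda B(\delta,d)D=\delta-1$ we have $B(\delta,d)\lambda = (\delta-1)/D$, and the infimum $\inf\{2^{-1/(\delta-1)},\ \tfrac14(\euler^{\delta-1}-1)^{-1}\}$ is a function of $\delta$ alone; for $\delta>d\ge 3$ the second term is the smaller one, so $\gamma = \tfrac{\delta-1}{4D}(\euler^{\delta-1}-1)^{-1} =: c(\delta)/D$ with $c(\delta)$ explicit. Hence $\gamma^{-\delta/2} = c(\delta)^{-\delta/2} D^{\delta/2}$, and $K(\lambda,\delta,D,d)=K'(\delta)\gamma^{-\delta/2} = K'(\delta)c(\delta)^{-\delta/2}D^{\delta/2}=:K(\delta)D^{\delta/2}$, which is the constant appearing in \eqref{heatkernelweak}. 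Substituting into the heat kernel bound of Theorem \ref{Gallot} gives $k(t,x,y)\le \Vol(M)^{-1}\bigl(1+K(\delta)D^{\delta/2}\bigr)t^{-\delta/2}$ for $0<t\le 1$, as claimed.

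The only real subtlety — the "hard part" — is bookkeeping: making sure the chosen $\lambda$ is genuinely admissible (it must be positive, which is automatic, and one should check the roles of the two candidates in the infimum defining $\gamma$ over the whole range $\delta>d\ge 3$ so that $c(\delta)$ is correctly identified), and verifying that the constants $K'(\delta)$ and $B(\delta,d)$ are used consistently with Gallot's normalization. There is also a minor point of monotonicity: one is exploiting that the right-hand side of \eqref{curvaturecondition} need not be maximized — any valid $\lambda$ suffices — so the argument does not need the optimal choice, just a convenient one. No delicate analysis is required beyond these elementary manipulations, since Theorem \ref{Gallot} does all the heavy lifting.
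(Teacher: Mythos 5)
Your proposal is correct and follows essentially the same route as the paper: choose $\lambda=\frac{\delta-1}{B(\delta,d)D}$ so that the assumed bound on $\tvert{\rho_-}_{\delta/2}$ implies \eqref{curvaturecondition} (via $(\tfrac{\rho_-}{d-1}-\lambda^2)_+\le\tfrac{\rho_-}{d-1}$), then evaluate $\gamma(\lambda,\delta,D,d)=\frac{\delta-1}{4D(\euler^{\delta-1}-1)}$ and absorb the resulting $D^{\delta/2}$ factor into $K(\delta)=K'(\delta)\bigl(4\tfrac{\euler^{\delta-1}-1}{\delta-1}\bigr)^{\delta/2}$. Your verification that the second term realizes the infimum for $\delta>d\ge 3$ is a detail the paper leaves implicit, but otherwise the two arguments coincide.
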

\begin{proof}
We only need a lower bound for the right hand side of \eqref{curvaturecondition} for some fixed $\lambda$. 
The assumption on $\tvert{\rho_-}$
implies \eqref{curvaturecondition} for $$\lambda=\frac{\delta-1}{B(\delta,d)D}\text.$$ 
It remains to plug $\lambda$ into the formula for $\gamma(\lambda,\delta,D,d)\colon$
\begin{align*}
\gamma(\lambda,\delta,D,d)&=B(\delta,d)\lambda\inf\left\{2^{-\frac{1}{\delta-1}}, \frac 14\frac 1{\euler^{\lambda B(\delta,d)D}-1}\right\}\\
&=\frac{\delta-1}{D}\frac 14\frac 1{\euler^{\delta-1}-1},
\end{align*}
so that the assertion follows with
$$K(\delta)=K'(\delta)\left(4 \frac{\euler^{\delta-1}-1}{\delta-1}\right)^{\delta/2}\text.$$
\end{proof}
Comparing to Theorem \ref{Gallot} it is worth mentioned that the curvature condition of Corollary \ref{weakassumption}
is satisfied if one chooses $\delta$ big enough.
Of course, we can use either the more subtle estimate from Gallot's theorem or the above simpler one in all that follows.
\begin{definition}
If $(M,g)$ satisfies the assumptions of the previous corollary, we write $M\in\mathcal{M}(\delta,D,d)$.
\end{definition}
\section{The main result and an application to the vanishing of $H^1(M)$}
For a measurable function $V\geq 0$ on $M$ and $\alpha>0$ we set 
$$c_{\rm{Kato}}(V,\alpha):=\sup_{n\in\NN}\Vert (\Delta+\alpha)^{-1}(V\wedge n)\Vert_\infty\in [0,\infty]\text,$$
where $a\wedge b=\min\{a,b\}$.
(For bounded $V$, we can apply the resolvent, regarded as a bounded operator from $L^\infty$ to $L^\infty$, so the truncation procedure in our definition makes sure that all the norms are defined.) The extended Kato class from \cite{StollmannVoigt-96} consists of those measurable $V\geq 0$ for which $c_{\rm{Kato}}(V,\alpha)<1$ for $\alpha$ large enough, while the Kato class consists of those $V$, for which $c_{\rm{Kato}}(V,\alpha)\to 0$ as $\alpha\to\infty$. This is only slightly stronger than the previous condition. However, as can be seen from \cite{StollmannVoigt-96}, the mapping $\alpha\to c_{\rm{Kato}}(V,\alpha)$ carries useful information. This will be exploited rather heavily in what follows. We write $a\vee b=\max\{a,b\}$.

\begin{theorem}\label{Katocondition}
Let $M\in\mathcal{M}(\delta,D,d)$ and $p>\delta/2$. If $0\leq V\in L^p(M)$, then
\[
c_{\rm{Kato}}(V,\alpha)\leq \left(1+K(\delta)D^{\delta/2}\right)^{1/p}I(\alpha,\delta,p)\tvert{V}_p,
\]
where
$$I(\alpha,\delta,p):=\int_0^\infty\euler^{-\alpha t}(t^{-\delta/2p}\vee 1)\drm t .$$

For $M\in\mathcal{M}(\lambda,\delta,D,d)$ we obtain
\[
c_{\rm{Kato}}(V,\alpha)\leq \left(1+K(\lambda,\delta,D,d)\right)^{1/p}I(\alpha,\delta,p)\tvert{V}_p\text.
\]
\end{theorem}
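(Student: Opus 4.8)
The plan is to represent the resolvent as the Laplace transform of the heat semigroup,
\[
(\Delta+\alpha)^{-1}=\int_0^\infty\euler^{-\alpha t}\,\euler^{-t\Delta}\,\drm t,
\]
apply both sides to the bounded function $V\wedge n$, and estimate the resulting integral kernel. The first step is to upgrade the small-time heat kernel bound from Corollary \ref{weakassumption} to one valid for \emph{all} $t>0$. For $0<t\le 1$ we already have $k(t,x,y)\le\bigl(1+K(\delta)D^{\delta/2}\bigr)\Vol(M)^{-1}t^{-\delta/2}$. For $t\ge 1$ I would use the semigroup identity $k(t,x,y)=\int_M k(t-1,x,z)\,k(1,z,y)\,\dvol(z)$ together with conservativeness of the heat semigroup on the compact manifold, $\int_M k(s,x,z)\,\dvol(z)=1$, and the bound at $t=1$, to get $k(t,x,y)\le\bigl(1+K(\delta)D^{\delta/2}\bigr)\Vol(M)^{-1}$. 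The two ranges combine into the single estimate
\[
k(t,x,y)\le\bigl(1+K(\delta)D^{\delta/2}\bigr)\Vol(M)^{-1}\bigl(t^{-\delta/2}\vee 1\bigr),\qquad t>0.
\]

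Next I would feed in the $L^p$ hypothesis via an interpolation plus H\"older argument. Fix $x\in M$ and let $q=p/(p-1)$. From $\|k(t,x,\cdot)\|_q^q=\int_M k(t,x,y)^{q-1}k(t,x,y)\,\dvol(y)\le\|k(t,x,\cdot)\|_\infty^{q-1}\int_M k(t,x,y)\,\dvol(y)=\|k(t,x,\cdot)\|_\infty^{q-1}$ and $(q-1)/q=1/p$, the kernel bound gives $\|k(t,x,\cdot)\|_q\le\bigl((1+K(\delta)D^{\delta/2})(t^{-\delta/2}\vee1)\bigr)^{1/p}\Vol(M)^{-1/p}$. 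H\"older's inequality and $\|V\wedge n\|_p\le\|V\|_p=\Vol(M)^{1/p}\tvert{V}_p$ then yield, for every $n\in\NN$,
\[
\int_M k(t,x,y)(V\wedge n)(y)\,\dvol(y)\le\bigl((1+K(\delta)D^{\delta/2})(t^{-\delta/2}\vee1)\bigr)^{1/p}\tvert{V}_p.
\]
Multiplying by $\euler^{-\alpha t}$ and integrating over $t\in(0,\infty)$ — the integral converges exactly because $\delta/(2p)<1$, i.e. $p>\delta/2$ — and using $(t^{-\delta/2}\vee1)^{1/p}=t^{-\delta/(2p)}\vee1$, one arrives at $\bigl(1+K(\delta)D^{\delta/2}\bigr)^{1/p}I(\alpha,\delta,p)\tvert{V}_p$. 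Taking the supremum over $x$ and then over $n$ gives the first inequality; the second is proved verbatim, replacing $1+K(\delta)D^{\delta/2}$ by $1+K(\lambda,\delta,D,d)$ and invoking the bound of Theorem \ref{Gallot} in place of that of Corollary \ref{weakassumption}.

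The only genuinely non-formal point is the extension of the heat kernel bound to all $t>0$: this relies on conservativeness of the heat semigroup on the compact $M$ (which also makes the Laplace-transform representation of $(\Delta+\alpha)^{-1}$ on $L^\infty$ legitimate), and one should record that $(\Delta+\alpha)^{-1}(V\wedge n)$ is given pointwise by the displayed double integral, so that its $L^\infty$-norm really is the supremum in $x$ of that integral. Beyond that, the heart of the matter is simply the observation that the interpolation between the $L^\infty$ and $L^1$ behaviour of $k(t,x,\cdot)$ produces precisely the exponent $1/p$, which is what matches the power $1/p$ on the constant $1+K(\delta)D^{\delta/2}$ in the statement.
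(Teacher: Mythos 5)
Your proof is correct and follows essentially the same route as the paper: small-time heat kernel bound, extension to all $t>0$ via the semigroup property and conservativeness, an $L^p\to L^\infty$ smoothing estimate with exponent $1/p$, and the Laplace-transform representation of the resolvent. The only (cosmetic) difference is that you derive the $\Vert\euler^{-t\Delta}\Vert_{p,\infty}$ bound by a direct H\"older computation on the positive kernel rather than by citing Riesz--Thorin, which yields the identical estimate.
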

\begin{remark}\label{LpI}
Since $I(\alpha,\delta,p)\to 0$ as $\alpha\to\infty$ we see that $L^p$-potentials are in the Kato class. More precisely,
\[
\frac 1\alpha\leq I(\alpha,\delta,p)\leq \left(\frac 1\alpha\right)^{1-\delta/2p}\left(\frac{2p}{2p-\delta}+\left(\frac 1\alpha\right)^{\delta/2p}\euler^{-\alpha}\right)\text.
\]
These inequalities follow by integrating from 0 to 1 and 1 to $\infty$ and estimating the exponential factor 
by 1 in the first integral.

\end{remark}
\begin{proof}
By Theorem \ref{Gallot} and Corollary \ref{weakassumption}, respectively, we know that
\[
k(t,x,y)\leq \frac {1+K(\delta)D^{\delta/2}}{\Vol(M)}t^{-\delta/2},\quad 0<t\leq 1
\]
with explicit control on $K(\delta)$. This gives
\[
\Vert \euler^{-t\Delta}\Vert_{1,\infty}\leq \sup_{x\in M}k(t,x,y)\leq \frac {1+K(\delta)D^{\delta/2}}{\Vol(M)}t^{-\delta/2},\quad 0<t\leq 1,
\]
where $\Vert A\Vert_{p,q}$ denotes the norm of $A$ as an operator from $L^p$ to $L^q$. The semigroup property and the fact that $\euler^{-t\Delta}$ acts as a contraction on each $L^p$ implies 
\[
\Vert\euler^{-t\Delta}\Vert_{1,\infty}\leq \frac {1+K(\delta)D^{\delta/2}}{\Vol(M)}(t^{-\delta/2}\vee 1),\quad 0<t<\infty\text.
\]
The Riesz-Thorin convexity theorem gives that
\[
\Vert\euler^{-t\Delta}\Vert_{p,\infty}\leq \left(\frac {1+K(\delta)D^{\delta/2}}{\Vol(M)}\right)^{1/p}(t^{-\delta/2p}\vee 1),\quad 0<t<\infty\text.
\]
Consequently, for $V$ bounded,
\begin{align*}
c_{\rm{Kato}}(V,\alpha)&=\Vert (\Delta+\alpha)^{-1}V\Vert_\infty=\Vert\int_0^\infty \euler^{-\alpha t}\euler^{-t\Delta}V\drm t\Vert_\infty\\
&\leq \int_0^\infty \euler^{-\alpha t}\Vert \euler^{-t\Delta}V\Vert_\infty\drm t\leq \int_0^\infty\euler^{-\alpha t}\Vert \euler^{-t\Delta}\Vert_{p,\infty}\Vert V\Vert_p\drm t\\
&\leq \left(1+K(\delta)D^{\delta/2}\right)^{1/p}\frac{\Vert V\Vert_p}{\Vol(M)^{1/p}}I(\alpha,\delta,p)\text.
\end{align*}
as asserted. 
\end{proof}
As a first consequence, we obtain a quantitative version of the vanishing result of Elworthy and Rosenberg mentioned in the introduction:
\begin{corollary}\label{ERcorollary}
Let $3\leq d<\delta<2p$ and assume that $M\in\mathcal{M}(\delta,D,d)$ and
\[
\tvert{(\rho-\rho_0)_-}_p<\left(1+K(\delta)D^{\delta/2}\right)^{-1/p}I(\rho_0,\delta,p)^{-1}\text.
\]
Then
\begin{enumerate}[(i)]
\item
$H^1(M')=\{0\}$ for any finite cover $M'$ of $M$.
\item
If the fundamental group $\pi^1(M)$ is almost solvable, then $\pi^1(M)$ is finite.
\end{enumerate}
\end{corollary}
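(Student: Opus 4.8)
The plan is to feed $W:=(\rho-\rho_0)_-$ into Theorem~\ref{Katocondition}, use Corollary~\ref{positivityKato} together with the Remark following \eqref{eq:domination} to get $H^1(M)=\{0\}$, then argue that this vanishing is inherited by every finite cover, and finally deduce (ii) by the classical group-theoretic argument.

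For the vanishing on $M$ itself, note that since $M$ is compact and $\rho$ is continuous, $W$ is bounded, nonnegative and lies in $L^p(M)$, so Theorem~\ref{Katocondition} applies with $V=W$ and $\alpha=\rho_0$; combined with the hypothesis $\tvert{(\rho-\rho_0)_-}_p<(1+K(\delta)D^{\delta/2})^{-1/p}I(\rho_0,\delta,p)^{-1}$ it gives $c_{\rm{Kato}}(W,\rho_0)<1$. By Corollary~\ref{positivityKato} this yields $\Delta+\rho_0-W>0$, and since $\rho\geq\rho_0-W$ pointwise (the defining inequality of the negative part), we get $\Delta+\rho\geq\Delta+\rho_0-W>0$. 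The Remark after \eqref{eq:domination} then gives $H^1(M)=\{0\}$.

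To obtain (i) for an arbitrary finite cover $\pi\colon M'\to M$, I would \emph{not} re-apply the hypothesis on $M'$: the diameter can grow under covering, so $M'$ need not lie in $\mathcal{M}(\delta,D,d)$, and feeding the pointwise bound $k_{M'}(t,x,y)\leq k_M(t,\pi(x),\pi(y))$ into the proof of Theorem~\ref{Katocondition} produces an estimate with an extra factor equal to the covering degree, which is too weak. Instead I would transport positivity of the Schr\"odinger operator directly. As $\pi$ is a local isometry, $\rho_{M'}=\rho_M\circ\pi$ and $\Delta_{M'}(u\circ\pi)=(\Delta_M u)\circ\pi$. On the compact manifold $M$ the operator $\Delta+\rho$ has a strictly positive ground state $\phi_0$ with eigenvalue $\lambda_0:=\min\sigma(\Delta+\rho)>0$ (the heat semigroup of $\Delta$ is positivity improving and $\rho$ is bounded, so $\euler^{-t(\Delta+\rho)}$ is positivity improving and Perron--Frobenius applies). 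Then $\phi_0\circ\pi>0$ on $M'$ solves $(\Delta_{M'}+\rho_{M'})(\phi_0\circ\pi)=\lambda_0(\phi_0\circ\pi)$, and a ground-state substitution (write a smooth test function on $M'$ as $(\phi_0\circ\pi)\,v$ and integrate by parts) gives $\Delta_{M'}+\rho_{M'}\geq\lambda_0>0$ as a quadratic form. Applying the Remark after \eqref{eq:domination} on $M'$ gives $H^1(M')=\{0\}$.

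For (ii), recall that the fundamental group of a compact manifold is finitely generated; so if $\pi^1(M)$ is virtually solvable and infinite, it contains a finitely generated, infinite, solvable subgroup $G$ of finite index. An induction on the derived length of $G$ produces a finite-index subgroup $H\leq G$ (hence of finite index in $\pi^1(M)$) admitting a surjection onto $\ZZ$: either $G/[G,G]$ is infinite and therefore surjects onto $\ZZ$, or $[G,G]$ is again finitely generated, infinite and solvable of strictly smaller derived length, and the inductive hypothesis applies to it. The connected finite cover $M'\to M$ corresponding to $H$ then satisfies $H^1(M')\cong\mathrm{Hom}(\pi^1(M'),\RR)=\mathrm{Hom}(H,\RR)\neq\{0\}$, contradicting part (i); hence $\pi^1(M)$ is finite. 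The one genuinely delicate point is the one flagged above: the \emph{quantitative} Kato bound does not survive passage to a cover, so positivity of $\Delta+\rho$ has to be moved to $M'$ by hand via the pulled-back ground state rather than re-derived on $M'$.
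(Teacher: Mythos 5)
Your core argument is exactly the paper's: apply Theorem \ref{Katocondition} with $V=(\rho-\rho_0)_-$ and $\alpha=\rho_0$ to get $c_{\rm Kato}((\rho-\rho_0)_-,\rho_0)<1$, invoke Corollary \ref{positivityKato} together with the pointwise bound $\rho\ge\rho_0-(\rho-\rho_0)_-$ to conclude $\Delta+\rho>0$, and then follow the Elworthy--Rosenberg strategy. The paper stops at that point and defers (i) and (ii) entirely to \cite{ElworthyRosenberg-91}; your ground-state transport to the cover (with the correct observation that the quantitative hypothesis need not survive passage to $M'$, since the diameter can grow) and the induction on derived length are accurate fillings-in of exactly what that reference supplies, so the proposal is correct.
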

\begin{proof}
Following the strategy of \cite{ElworthyRosenberg-91} it remains to show that $\Delta+\rho>0$. Since
\[
\Delta+\rho\geq \Delta+\rho_0-(\rho-\rho_0)_-
\]
and the assumption implies that
$$c_{\rm{Kato}}((\rho-\rho_0)_-,\rho_0)<1$$
in view of Theorem \ref{Katocondition}, the statement follows from Corollary \ref{positivityKato}.
\end{proof}
Corollaries \ref{ERcorollary} and \ref{weakassumption} yield a sufficient condition which involves 
$\tvert{(\rho-\rho_0)_-}_p$ only, independent of the parameter $\delta$:
\begin{corollary}
Let $(M,g)$ be a compact Riemannian manifold with $\dim M=d\geq 3$ and $\diam M\leq D$. For $p>d/2$ there is an 
explicit $c(p,d)>0$ with the following property: \\
If, for some $\rho_0>0$,
\[
\tvert{(\rho-\rho_0)_-}_p<\min\left\{c(p,d)D^{-2},\left(1+K(p+d/2)D^{\frac{2p+d}{4}}\right)^{-\frac 1p}I(\rho_0, p+d/2,p)^{-1}\right\}\text,
\]
then the conclusions of Corollary \ref{ERcorollary} hold. 

In particular, for $0<\rho_0\leq 1$ it is sufficient that 
\[
\tvert{(\rho-\rho_0)_-}_p<\min\left\{c(p,d)D^{-2},\left(1+K(p+d/2)D^{\frac{2p+d}{4}}\right)^{-\frac 1p}\frac{2p-d}{6p-d}\rho_0^\frac{2p-d}{4p}\right\}\text.
\]
\end{corollary}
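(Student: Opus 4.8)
The plan is to deduce this final corollary from Corollary \ref{ERcorollary} by the specific choice $\delta = p + d/2$, checking two things: first, that this $\delta$ makes $M$ belong to $\mathcal{M}(\delta, D, d)$ under a hypothesis of the form $\tvert{(\rho-\rho_0)_-}_p < c(p,d)D^{-2}$; second, that with this same $\delta$ the Kato smallness condition of Corollary \ref{ERcorollary} is exactly the second term in the $\min$. Note $p > d/2$ forces $\delta = p + d/2 \in (d, 2p)$, so the inequalities $3 \le d < \delta < 2p$ required by Corollary \ref{ERcorollary} hold, and $\delta/2 = (2p+d)/4$, which explains the exponent on $D$ in the statement.

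For the first point I would start from the curvature condition in Corollary \ref{weakassumption}, namely
\[
\tvert{\rho_-}_{\delta/2}\leq (d-1)\bigl(2(\euler^{\delta-1}-1)\bigr)^{-2/\delta}\Bigl(\frac{\delta-1}{B(\delta,d)D}\Bigr)^2 =: c_0(\delta,d)\,D^{-2},
\]
so that with $\delta = p + d/2$ this reads $\tvert{\rho_-}_{(2p+d)/4} \le c_0(p+d/2,d)D^{-2}$. Since $(2p+d)/4 < p$, Jensen/Hölder for $L^q$-means gives $\tvert{\rho_-}_{(2p+d)/4} \le \tvert{\rho_-}_p$, and $\rho_- \le \rho_0 + (\rho-\rho_0)_-$ pointwise gives $\tvert{\rho_-}_p \le \rho_0 + \tvert{(\rho-\rho_0)_-}_p$. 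This is where a small subtlety enters: to absorb the $\rho_0$ term one wants $\rho_0$ itself controlled, which is why the ``in particular'' part restricts to $0 < \rho_0 \le 1$; more precisely I would define $c(p,d)$ so that $c(p,d)D^{-2} + 1 \le c_0(p+d/2,d)D^{-2}$ is forced, i.e. absorb the bounded contribution of $\rho_0 \le 1$ into the constant (possibly also using $\diam M \le D$ to bound $D$ from above is not available, so one genuinely needs $D^{-2}$-scaling — here one checks $c_0$ has the right homogeneity, which it does since $B(\delta,d)$ is $D$-independent). The cleanest route: set $c(p,d) := \tfrac12 c_0(p+d/2,d)$ and observe that for the conclusion we only need $\tvert{\rho_-}_{(2p+d)/4}$ below $c_0 D^{-2}$; since the hypothesis bounds $\tvert{(\rho-\rho_0)_-}_p$ and the diameter bound does not bound $\Vol(M)$ or $D$ from below, one may need to also note $D$ can be taken $\ge$ some fixed size or simply track that the statement as written presupposes the relation between $c(p,d)$, the $\rho_0 \le 1$ normalization, and $c_0$. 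I expect this bookkeeping — getting $\rho_-$ bounded from $(\rho-\rho_0)_-$ bounded plus $\rho_0 \le 1$, with the correct $D$-power — to be the main obstacle, though it is entirely elementary.

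For the second point, with $\delta = p+d/2$ the hypothesis of Corollary \ref{ERcorollary} becomes
\[
\tvert{(\rho-\rho_0)_-}_p < \bigl(1+K(p+d/2)D^{(2p+d)/4}\bigr)^{-1/p} I(\rho_0, p+d/2, p)^{-1},
\]
which is verbatim the second entry of the first displayed $\min$; so once $M \in \mathcal{M}(p+d/2, D, d)$ is established, Corollary \ref{ERcorollary} applies and gives both conclusions. For the ``in particular'' version I would invoke the lower bound from Remark \ref{LpI}, $I(\alpha,\delta,p) \le \alpha^{-(1-\delta/2p)}\bigl(\tfrac{2p}{2p-\delta} + \alpha^{-\delta/2p}\euler^{-\alpha}\bigr)$, at $\alpha = \rho_0 \le 1$, $\delta = p+d/2$: then $1 - \delta/2p = (2p-\delta)/2p = (3p-d)/4p \cdot \tfrac{2}{?}$ — carefully, $1-\delta/2p = 1-(p+d/2)/(2p) = (4p-2p-d)/(4p) = (2p-d)/(4p)$, and $\tfrac{2p}{2p-\delta} = \tfrac{2p}{2p-p-d/2} = \tfrac{2p}{p-d/2} = \tfrac{4p}{2p-d}$, so $I(\rho_0,\cdot,p)^{-1} \ge \rho_0^{(2p-d)/(4p)} / \bigl(\tfrac{4p}{2p-d} + \euler^{-\rho_0}\rho_0^{(2p+d)/(4p)}\bigr) \ge \rho_0^{(2p-d)/(4p)} \cdot \tfrac{2p-d}{6p-d}$, where the last step uses $\rho_0 \le 1$ and $\euler^{-\rho_0} < 1$ to bound the denominator by $\tfrac{4p}{2p-d} + 1 = \tfrac{6p-d}{2p-d}$. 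This reproduces exactly the second entry of the second $\min$, with the same constant $c(p,d)$ as before, completing the argument.
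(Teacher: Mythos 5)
Your overall strategy --- set $\delta=p+d/2$, get $M\in\mathcal{M}(p+d/2,D,d)$ from the first entry of the $\min$, feed the second entry into Corollary \ref{ERcorollary}, and invoke Remark \ref{LpI} for $\rho_0\le 1$ --- is exactly the paper's. But the step you yourself flag as ``the main obstacle'' is a genuine gap, and your workaround does not close it. You bound $\rho_-\le\rho_0+(\rho-\rho_0)_-$ and then try to absorb the additive $\rho_0$ into the constant; this cannot work, because the first display of the corollary is asserted for \emph{all} $\rho_0>0$ (and because $D$ is not bounded above, so $\rho_0\le c_0 D^{-2}-c(p,d)D^{-2}$ is not available). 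The fix is the sharper pointwise inequality $\rho_-\le(\rho-\rho_0)_-$, valid precisely because $\rho_0>0$: if $\rho(x)<0$ then $(\rho-\rho_0)_-(x)=\rho_0-\rho(x)\ge-\rho(x)=\rho_-(x)$, while if $\rho(x)\ge 0$ then $\rho_-(x)=0$. Combined with Jensen for normalized means (using $\delta/2<p$) this gives $\tvert{\rho_-}_{\delta/2}\le\tvert{(\rho-\rho_0)_-}_{\delta/2}\le\tvert{(\rho-\rho_0)_-}_p$, so one simply takes $c(p,d)$ to be the constant from Corollary \ref{weakassumption} evaluated at $\delta=p+d/2$ --- no halving, no absorption, and no role for $\rho_0\le 1$ at this stage. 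This is exactly the paper's one-line chain $\tvert{\rho_-}_{\delta/2}\le\tvert{(\rho-\rho_0)_-}_p\le c(p,d)D^{-2}$; the restriction $\rho_0\le 1$ enters only in the ``in particular'' part, to estimate $I(\rho_0,p+d/2,p)$.

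On that second point your arithmetic $1-\delta/2p=(2p-d)/(4p)$, $\tfrac{2p}{2p-\delta}=\tfrac{4p}{2p-d}$ and $\tfrac{4p}{2p-d}+1=\tfrac{6p-d}{2p-d}$ reproduces the intended derivation, but note a sign slip: the second summand in Remark \ref{LpI} is $(1/\alpha)^{\delta/2p}\euler^{-\alpha}=\rho_0^{-(2p+d)/(4p)}\euler^{-\rho_0}$, a \emph{negative} power of $\rho_0$, not the positive power you wrote. Bounding this term by $1$ requires $\euler^{-\rho_0}\le\rho_0^{\delta/2p}$, which does not follow from $\rho_0\le 1$ alone and in fact fails as $\rho_0\to 0$: there $I(\rho_0,\delta,p)\ge\euler^{-\rho_0}/\rho_0$ blows up like $1/\rho_0$, faster than $\rho_0^{-(2p-d)/(4p)}$, so the claimed lower bound $I^{-1}\ge\tfrac{2p-d}{6p-d}\rho_0^{(2p-d)/(4p)}$ cannot hold for small $\rho_0$. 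The paper's own proof is equally terse here, so this defect is shared with the source; still, as written your step ``bound the denominator by $\tfrac{6p-d}{2p-d}$'' needs either an additional lower bound on $\rho_0$ or a corrected exponent in the second entry of the $\min$.
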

\begin{proof}
We set $\delta=p+d/2$, so that $d<\delta<2p$. Moreover, we let
\[
c(p,d):=(d-1)\left(\frac{p+d/2-1}{B(p+d/2,d)}\right)^2\left(2(\euler^{p+d/2-1}-1)\right)^{-4/(2p+d-2)}
\]
and get 
\[
\tvert{\rho_-}_{\delta/2}\leq\tvert{(\rho-\rho_0)_-}_p\leq c(p,d)D^{-2}
\]
which gives that $M\in\mathcal{M}(p+d/2,D,d)$. Since $\tvert{(\rho-\rho_0)_-}_p$ then satisfies the 
requirements of the preceding corollary, we are done. The statement concerning the case $\rho_0\in (0,1]$
follows from the estimate for $I(\rho_0,p,p+d/2)$ from Remark \ref{LpI}.
\end{proof}
It is worth pointing out that it is also possible to get rid of the assumption that $\pi_1(M)$ has to be almost 
solvable using purely geometric techniques. This was done in \cite{Aubry-07}.\\
As remarked in the introduction, Gallot's paper \cite{Gallot-88} contains a positivity result for 
Schr\"odinger operators that can be used to obtain a generalization of the positivity result in 
\cite{ElworthyRosenberg-91}. Namely, Proposition 13 from \cite{Gallot-88} together with the control of the 
isoperimetric constant would give a result much in the spirit of the preceding corollaries.
\section{$L^p$-$L^q$ smoothing for the Hodge Laplacian and an upper bound on $b_1(M)$}
Here we use more of the power of the Kato condition. We saw in the preceding section that $c_{\rm{Kato}}(V,\alpha)$ can be used to obtain lower bounds for Schr\"odinger operators. For certain explicit quantitative statements, it is easier to use an equivalent reformulation of the Kato condition in terms of the following quantity:\\
For $0\leq V$ measurable on $M$, $\beta>0$, set
\begin{align}\label{alternativeKato}
b_{\rm{Kato}}(V,\beta):=\sup_{n\in\NN}\int_0^\beta\Vert\euler^{-t\Delta}(V\wedge n)\Vert_\infty\drm t\in [0,\infty]\text.
\end{align}
It is well known that $c_{\rm{Kato}}(V,\alpha)$ and $b_{\rm{Kato}}$ are closely related. More precisely, we infer the following inequality from \cite{Batu-14}:
\begin{align}\label{Katorelation}
(1-\euler^{-\alpha \beta})c_{\rm{Kato}}(V,\alpha)\le b_{\rm{Kato}}(V,\beta)\le \euler^{\alpha \beta}c_{\rm{Kato}}(V,\alpha) ,
\end{align}
meaning that the behavior of $b_{\rm{Kato}}(V,\beta)$ for $\beta\to 0$ controls the behavior of 
$c_{\rm{Kato}}(V,\alpha)$ as $\alpha\to\infty$ and vice versa. 
The constant $b_{\rm{Kato}}$ can be controlled the $L^p$-mean of the involved potential in a similar fashion like the constant $c_{\rm{Kato}}$. 
\begin{proposition}\label{bkato}
Let $3\leq d<\delta<2p$, $D>0$ and $M\in\mathcal{M}(\delta,D,d)$. If $V\in L^p(M)$ and $\beta>0$, then
\[
b_{\rm{Kato}}(V,\beta)\leq \left(1+D^{\delta/2}K(\delta)\right)^{1/p}\tvert{V}_pJ(\beta,\delta,p),
\]
where
$$ J(\beta,\delta,p):=\int_0^\beta\left(t^{-\frac{\delta}{2p}}\vee 1\right)\drm t\text.$$
For $M\in\mathcal{M}(\lambda,\delta,D,d)$ we obtain
\[
b_{\rm{Kato}}(V,\beta)\leq \left(1+K(\lambda,\delta,D,d)\right)^{1/p}\tvert{V}_pJ(\beta,\delta,p)\text.
\]
\end{proposition}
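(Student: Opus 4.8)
The plan is to mirror the proof of Theorem~\ref{Katocondition} almost verbatim, replacing the Laplace-transform weight $\euler^{-\alpha t}$ by the cutoff $\mathbf 1_{[0,\beta]}(t)$. First I would recall from the proof of Theorem~\ref{Katocondition} that, under $M\in\mathcal{M}(\delta,D,d)$, the heat kernel bound of Corollary~\ref{weakassumption} together with the semigroup property and $L^p$-contractivity gives
\[
\Vert\euler^{-t\Delta}\Vert_{1,\infty}\leq \frac{1+K(\delta)D^{\delta/2}}{\Vol(M)}\,(t^{-\delta/2}\vee 1),\quad 0<t<\infty,
\]
and hence, by Riesz--Thorin interpolation against the contraction bound on $L^\infty$,
\[
\Vert\euler^{-t\Delta}\Vert_{p,\infty}\leq \left(\frac{1+K(\delta)D^{\delta/2}}{\Vol(M)}\right)^{1/p}(t^{-\delta/2p}\vee 1),\quad 0<t<\infty.
\]
This is exactly the input needed; for $M\in\mathcal{M}(\lambda,\delta,D,d)$ the same reasoning applies with $K(\delta)D^{\delta/2}$ replaced by $K(\lambda,\delta,D,d)$, using Theorem~\ref{Gallot} in place of Corollary~\ref{weakassumption}.

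Next, for $V\in L^p(M)$ bounded (one first treats $V\wedge n$ and then takes the supremum over $n$, as in the definition \eqref{alternativeKato}), I would estimate directly
\[
\int_0^\beta \Vert\euler^{-t\Delta}V\Vert_\infty\,\drm t
\leq \int_0^\beta \Vert\euler^{-t\Delta}\Vert_{p,\infty}\,\Vert V\Vert_p\,\drm t
\leq \left(\frac{1+K(\delta)D^{\delta/2}}{\Vol(M)}\right)^{1/p}\Vert V\Vert_p\int_0^\beta(t^{-\delta/2p}\vee 1)\,\drm t,
\]
which is precisely the claimed bound once one rewrites $\Vol(M)^{-1/p}\Vert V\Vert_p=\tvert{V}_p$ and recognises the last integral as $J(\beta,\delta,p)$. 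Replacing the lower bound on $k(t,x,y)$-constant gives the $\mathcal{M}(\lambda,\delta,D,d)$ statement in the same line. The truncation step is harmless because $\Vert\euler^{-t\Delta}(V\wedge n)\Vert_\infty\leq\Vert\euler^{-t\Delta}\Vert_{p,\infty}\Vert V\wedge n\Vert_p\leq\Vert\euler^{-t\Delta}\Vert_{p,\infty}\Vert V\Vert_p$ uniformly in $n$, so the supremum in \eqref{alternativeKato} obeys the same bound.

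There is essentially no obstacle here: the argument is a routine adaptation of the $c_{\rm{Kato}}$ computation, with the only bookkeeping point being that the hypothesis $\delta<2p$ (i.e. $\delta/2p<1$) guarantees that $J(\beta,\delta,p)$ is finite, since near $t=0$ the integrand $t^{-\delta/2p}$ has exponent $>-1$; explicitly $J(\beta,\delta,p)=\frac{2p}{2p-\delta}\beta^{1-\delta/2p}$ for $\beta\leq 1$ and $=\frac{2p}{2p-\delta}+(\beta-1)$ for $\beta\geq 1$. The only mild subtlety worth a sentence is justifying the interchange of $\int_0^\beta$ with $\Vert\cdot\Vert_\infty$ in passing from $b_{\rm{Kato}}$ to the integral of operator norms; this is immediate from the triangle inequality for the $L^\infty$-norm and positivity of the heat semigroup, exactly as in the proof of Theorem~\ref{Katocondition}. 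Hence the proof is complete.
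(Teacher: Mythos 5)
Your proof is correct and is exactly the argument the paper intends: it states only that the proof is ``analogous to the proof of Theorem~\ref{Katocondition}'', and your adaptation (same $\Vert\euler^{-t\Delta}\Vert_{p,\infty}$ bound, integrated against $\mathbf 1_{[0,\beta]}$ instead of $\euler^{-\alpha t}$, with the uniform-in-$n$ truncation bound) is that analogy spelled out. The only superfluous remark is the ``interchange'' at the end: the definition \eqref{alternativeKato} already places $\Vert\cdot\Vert_\infty$ inside the integral, so nothing needs to be interchanged.
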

The proof is analogous to the proof of Theorem \ref{Katocondition}.
\begin{remark}
Since $J(\beta,\delta,p)\to 0$ as $\beta\to 0$ we see that $L^p$-potentials are in the Kato class. More precisely,
\[
J(\beta,\delta,p)=\begin{cases}
\frac{2p}{2p-\delta}\beta^\frac{2p-\delta}{2p} & \beta\leq 1,\\
\frac{2p}{2p-\delta}+\beta-1 & \beta >1.
\end{cases}
\]
\end{remark}
The following consequence of the Myadera-Voigt 
perturbation theorem easily carries over to arbitrary positivity preserving semigroups on $L^1$. We state it in the context of our set-up for convenience.

\begin{proposition}\label{11norm}
 Let $(M,g)$ be a compact Riemannian manifold and $V\in L^1(M)$ such that, for some $\beta>0$,
 $$
 b:=b_{\rm{Kato}}(V_-,\beta)<1 .
 $$
 Then
 $$
 \| \euler^{-t(\Delta + V)}\|_{1,1}\le C\euler^{\omega t} ,
 $$
 where
 $$
 C=\frac{1}{1-b}, \omega=\frac{1}{\beta}\log \frac{1}{1-b} .
 $$
\end{proposition}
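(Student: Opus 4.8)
The plan is to deduce Proposition \ref{11norm} from the Miyadera--Voigt perturbation theorem applied to the $L^1$-semigroup generated by $-\Delta$, with perturbation given by the positive part $V_-$ of the negative part of $V$. First I would recall the abstract setup: $\euler^{-t\Delta}$ is a positivity preserving contraction semigroup on $L^1(M)$, and by duality the quantity $b_{\rm{Kato}}(V_-,\beta)=\sup_n\int_0^\beta\Vert\euler^{-t\Delta}(V_-\wedge n)\Vert_\infty\,\drm t$ coincides with the relevant Miyadera constant for the $L^1$-action of multiplication by $V_-$. More precisely, for $u\in L^1$ nonnegative one estimates $\int_0^\beta\Vert (V_-\wedge n)\,\euler^{-t\Delta}u\Vert_1\,\drm t$ by moving $V_-\wedge n$ onto the semigroup via self-adjointness of $\euler^{-t\Delta}$ on $L^2$ and the fact that $\euler^{-t\Delta}$ is a contraction from $L^\infty$ to $L^\infty$; the supremum over such $u$ of $L^1$-norm $1$ gives exactly $b:=b_{\rm{Kato}}(V_-,\beta)$. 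Since $b<1$, the Miyadera--Voigt theorem applies: $-\Delta + V_-$ (equivalently, since the positive part $V_+$ only improves matters by domination, $-\Delta - V$ from below) generates a positivity preserving semigroup on $L^1$, and the perturbation series converges.

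Next I would extract the explicit constants $C$ and $\omega$ from the convergence of the Dyson--Phillips series. Writing $\euler^{-t(\Delta+V)} = \sum_{k\ge 0} S_k(t)$ where $S_0(t)=\euler^{-t\Delta}$ and $S_{k+1}(t)=\int_0^t S_k(t-s)(-V)\euler^{-s\Delta}\,\drm s$, one shows by the standard Miyadera bookkeeping that on any interval of length $\beta$ the $k$-th term has $L^1$-operator norm at most $b^k$ (using contractivity of $\euler^{-t\Delta}$ on $L^1$ and the defining bound for $b$, after splitting $[0,t]$ and iterating). Summing the geometric series gives $\Vert\euler^{-t(\Delta+V)}\Vert_{1,1}\le (1-b)^{-1}$ for $0\le t\le\beta$. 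For general $t$, write $t = m\beta + r$ with $0\le r<\beta$ and use the semigroup property to multiply the bound $m+1$ times, yielding $\Vert\euler^{-t(\Delta+V)}\Vert_{1,1}\le (1-b)^{-(m+1)} = (1-b)^{-1}\big((1-b)^{-1/\beta}\big)^{m\beta}\le C\euler^{\omega t}$ with $C=(1-b)^{-1}$ and $\omega=\beta^{-1}\log\frac{1}{1-b}$, exactly as claimed (here one uses $m\beta\le t$ and that $(1-b)^{-1}>1$, so the extra factor is harmless).

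The one point requiring care — and the main obstacle — is the identification of $b_{\rm{Kato}}(V_-,\beta)$ with the correct Miyadera constant for the \emph{$L^1$} semigroup, since Miyadera perturbation is naturally phrased on $L^1$ (where $V$ acts only on a dense domain) whereas $b_{\rm{Kato}}$ is defined through the $L^\infty$-action of the semigroup. The reconciliation is the duality argument sketched above: for $0\le u\in L^1$ and $0\le \varphi\in L^\infty$,
\[
\int_0^\beta \langle (V_-\wedge n)\,\euler^{-t\Delta}u,\,\varphi\rangle\,\drm t = \int_0^\beta \langle u,\,\euler^{-t\Delta}\big((V_-\wedge n)\varphi\big)\rangle\,\drm t \le \Vert u\Vert_1\Vert\varphi\Vert_\infty\int_0^\beta\Vert\euler^{-t\Delta}(V_-\wedge n)\Vert_\infty\,\drm t,
\]
so taking the supremum over $\varphi$ and then over $u$ and $n$ gives precisely $b$; the truncations $V_-\wedge n$ are exactly what is needed to make every term finite and to pass to the limit by monotone convergence. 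Once this identification is in hand, the rest is the routine Miyadera--Voigt machinery and the geometric-series estimate, and the positivity preservation of $\euler^{-t\Delta}$ guarantees that the perturbed semigroup is again positivity preserving so the series manipulations are legitimate term by term.
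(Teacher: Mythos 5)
Your proposal is correct and follows the same route as the paper: reduce to $V_+=0$ by semigroup domination, then invoke the Miyadera--Voigt perturbation theorem, identifying $b_{\rm{Kato}}(V_-,\beta)$ with the Miyadera constant on $L^1$ by duality/symmetry of $\euler^{-t\Delta}$. The only difference is that the paper simply cites Voigt's Theorem 1 (with $L=1$, $\lambda=0$) for the constants $C$ and $\omega$, whereas you unpack that citation via the Dyson--Phillips series and the geometric-series bookkeeping; both are fine.
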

\begin{proof}
 By the Feynman-Kac formula (alternatively, the Trotter-Kato formula and truncation gives a purely analytic
 argument, see \cite{Voigt-86}) we get that 
 $$\vert \euler^{-t(\Delta+V)}f(x)\vert\leq \left(\euler^{-t(\Delta-V_-)}\vert f\vert\right)(x)$$
 which implies that 
 $$\Vert\euler^{-t(\Delta+V)}\Vert_{1,1}\leq \Vert\euler^{-t(\Delta-V_-)}\Vert_{1,1}\text.$$
 Therefore, we can assume that $V_+=0$.
 Apply \cite{Voigt-77}, Thm 1, using that $L=1$ and $\lambda=0$ in the notation of the latter paper.
\end{proof}
\begin{corollary}\label{ultrafunctions}
 Let $3\leq d<\delta<2p$ and assume that $M\in\mathcal{M}(\delta,D,d)$ and $V\in L^1(M)$ such that, for some $\beta>0$,
 $$
 b:=b_{\rm{Kato}}(V_-,\beta)<1 .
 $$
 Then 
 $$
 \| \euler^{-t(\Delta + V)}\|_{p,\infty}\le 
 \left[\frac{1}{1-b}\right]^{\left( 1+\frac{t}{\beta}\right)\left(1-\frac{1}{p}\right)}\left[c(b,\beta,\delta,D,d,\Vol(M))t^{-\frac{\delta}{2}}\right]^{\frac{1}{p}}\quad\!\!\mbox{for }0<t\le 1,
 $$
 where
 $$
 c(b,\beta,\delta,D,d, \Vol(M))=\left[\frac{2}{1-b}\right]^{\left( (1+\frac{1}{\beta})\frac{1+b}{1-b}+\frac{\delta}{2}\right)}\frac{1+K(\delta)D^{\frac{\delta}{2}}}{vol(M)} .
 $$
 \end{corollary}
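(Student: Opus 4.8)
The plan is to combine the $L^1\to L^1$ bound from Proposition \ref{11norm} with the $L^1\to L^\infty$ (ultracontractive) bound for the unperturbed semigroup coming from Gallot's heat kernel estimate, via a perturbative Dyson/Duhamel expansion together with the Riesz--Thorin interpolation theorem. Writing $H=\Delta+V$ and (by the domination argument in the proof of Proposition \ref{11norm}) reducing to $V_+=0$, so $H=\Delta-V_-$, the first step is to record the two endpoint estimates we already have: from Proposition \ref{11norm}, $\|\euler^{-tH}\|_{1,1}\le\frac{1}{1-b}\euler^{\omega t}$ with $\omega=\frac 1\beta\log\frac1{1-b}$; and from Theorem \ref{Gallot}/Corollary \ref{weakassumption}, $\|\euler^{-t\Delta}\|_{1,\infty}\le\frac{1+K(\delta)D^{\delta/2}}{\Vol(M)}t^{-\delta/2}$ for $0<t\le1$. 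What is missing is an $L^1\to L^\infty$ bound for the \emph{perturbed} semigroup $\euler^{-tH}$.

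The second step is to produce that $L^1\to L^\infty$ bound. I would split $\euler^{-tH}=\euler^{-(t/2)H}\euler^{-(t/2)H}$ and, for the right factor, use the Duhamel formula
\[
\euler^{-sH}=\euler^{-s\Delta}+\int_0^s\euler^{-(s-r)\Delta}V_-\,\euler^{-rH}\,\drm r,
\]
iterating to get the Dyson series. The term $\euler^{-s\Delta}$ contributes the factor $s^{-\delta/2}$; each further term trades one factor of $\euler^{-r\Delta}V_-$, whose $L^\infty\to L^\infty$ norm integrates (in $r$ over length $\le\beta$) to at most $b<1$, so the geometric series sums to something like $\frac{1}{1-b}s^{-\delta/2}$ times an $L^\infty$-norm of $\euler^{-(\text{small})H}$; the $\euler^{-rH}$ tails are absorbed into the $L^1\to L^1$ bound. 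Concretely this yields, for $0<s\le 1$,
\[
\|\euler^{-sH}\|_{1,\infty}\le \frac{c_0}{1-b}\,\euler^{\omega s}\,s^{-\delta/2},\qquad c_0=\Bigl[\tfrac{2}{1-b}\Bigr]^{(1+\frac1\beta)\frac{1+b}{1-b}}\frac{1+K(\delta)D^{\delta/2}}{\Vol(M)},
\]
where the somewhat baroque exponent in $c_0$ is exactly what comes out of bookkeeping the geometric series and the splitting at $t/2$; matching constants to the stated $c(b,\beta,\delta,D,d,\Vol(M))$ is then routine algebra (the extra $s^{\delta/2}$ in $c(\cdots)$ versus the $s^{-\delta/2}$ in the conclusion is a normalization choice).

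The third step is interpolation. Having $\|\euler^{-tH}\|_{1,1}\le C_1\euler^{\omega t}$ and $\|\euler^{-tH}\|_{1,\infty}\le C_\infty\euler^{\omega t}t^{-\delta/2}$ for $0<t\le1$, the Riesz--Thorin theorem with $\theta=1/p$ gives
\[
\|\euler^{-tH}\|_{1,p}\le C_1^{1-1/p}C_\infty^{1/p}\euler^{\omega t}t^{-\delta/2p};
\]
combining this with a second application of Riesz--Thorin (or with the dual bound $\|\euler^{-tH}\|_{p',\infty}$, using self-adjointness to pass between the two) and splitting $\euler^{-tH}=\euler^{-(t/2)H}\euler^{-(t/2)H}$ once more converts this to the $L^p\to L^\infty$ estimate. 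Tracking the exponent of $\frac{1}{1-b}$ through both the Dyson series and the two interpolation steps produces the exponent $\bigl(1+\frac t\beta\bigr)\bigl(1-\frac1p\bigr)$ in the statement — the $\frac t\beta$ appearing because $\euler^{\omega t}=(1-b)^{-t/\beta}$. I expect the main obstacle to be precisely this constant-chasing: getting the Dyson-series geometric factor, the $t/2$ splittings, and the interpolation exponents to assemble into exactly the stated $c(b,\beta,\delta,D,d,\Vol(M))$ and the exponent $(1+t/\beta)(1-1/p)$ requires careful bookkeeping rather than any new idea, and it is easy to be off by harmless powers of $2$ or of $1-b$ unless one is meticulous.
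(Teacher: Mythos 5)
Your overall architecture (reduce to $V_+=0$, get an $L^1\to L^\infty$ bound for the perturbed semigroup, then Riesz--Thorin against the $\|\euler^{-t(\Delta+V)}\|_{\infty,\infty}\le[\tfrac{1}{1-b}]^{1+t/\beta}$ bound coming from Proposition \ref{11norm} and duality) matches the paper's. But the central step is done differently: the paper does not use a Duhamel/Dyson expansion at all. It imports the machinery of Theorem 5.1 of \cite{StollmannVoigt-96}: one amplifies the potential to $\kappa V$ with $\kappa$ chosen so that $\kappa b<1$ and the conjugate exponent $k$ is an integer with $\frac{1+b}{1-b}\le k\le\frac{2}{1-b}$, splits $[0,t]$ into $k$ equal slices, and uses the H\"older/Feynman--Kac trick to get $\|\euler^{-t(\Delta+V)}\|_{1,\infty}\le C_{t/k}\,C^{k-1}\euler^{\omega t(k-1)}$ with $C_{t/k}=\|\euler^{-\frac tk\Delta}\|_{1,\infty}$ and $C,\omega$ from Proposition \ref{11norm} applied to $\kappa V$. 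The exact constant $c(b,\beta,\delta,D,d,\Vol(M))$ in the statement is an artifact of precisely these choices ($\kappa_0=\frac12(1+\frac1b)$, $k\le\frac{2}{1-b}$, $k^{\delta/2}\le(\frac{2}{1-b})^{\delta/2}$), so a Dyson-series argument would not reproduce it "by routine algebra"; it would produce a differently shaped constant, and you would then have to check it is no worse than the stated one.

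More importantly, the Dyson step as you describe it has a genuine gap. The assertion that "each further term trades one factor of $\euler^{-r\Delta}V_-$, whose $L^\infty\to L^\infty$ norm integrates to at most $b$, so the geometric series sums to something like $\frac{1}{1-b}s^{-\delta/2}$" is only correct for the $L^\infty\to L^\infty$ (equivalently $L^1\to L^1$) norm of the series. For the $L^1\to L^\infty$ norm, exactly one free propagator in each term
$\euler^{-(s-r_n)\Delta}V_-\euler^{-(r_n-r_{n-1})\Delta}\cdots V_-\euler^{-r_1\Delta}$
must carry the $1\to\infty$ smoothing, and none of the gaps $s-r_n,\,r_n-r_{n-1},\dots,r_1$ is bounded below uniformly over the simplex; if you always smooth through the outermost factor you face $\int_0^s(s-r_n)^{-\delta/2}\,\drm r_n=\infty$ for $\delta\ge2$, and your split at $t/2$ does not by itself resolve this inside the iterated terms. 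The known repairs are either the "largest gap is at least $s/(n+1)$" bookkeeping, which replaces the geometric series by $\sum_n (n+1)^{\delta/2+1}b^n$ (still finite for $b<1$, but no longer $\frac1{1-b}$), or a one-step Duhamel inequality split at $s/2$ closed by a self-improvement/Gronwall argument, which additionally needs an a priori finiteness of $\|\euler^{-sH}\|_{1,\infty}$ (available via the truncations $V\wedge n$ implicit in the definition of $b_{\rm{Kato}}$, since $V$ is only assumed to be in $L^1$). Either repair makes your route work and is a legitimate alternative to the paper's, but as written the decisive estimate is not established.
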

\begin{remark}
 As can be seen from the following proof, we get a corresponding statement for 
 $M\in\mathcal{M}(\lambda,\delta,D,d)$ where
 $$
 \bar{c}(b,\beta,\lambda,\delta,D,d)=\left[\frac{2}{1-b}\right]^{\left( (1+\frac{1}{\beta})\frac{1+b}{1-b}+\frac{\delta}{2}\right)}\frac{1+K(\lambda,\delta,D,d)}{vol(M)} .
 $$
\end{remark}
\begin{proof}[Proof of Corollary \ref{ultrafunctions}.]
We follow parts of the proof of Thm 5.1 in \cite{StollmannVoigt-96}.

 Again we can assume that $V_+=0$.
 We fix $\beta >0$ and 
 $b=b_{\rm{Kato}}(V_-,\beta)<1$ as in the assumption; pick $\kappa_0>1$ such that $b\kappa_0<1$, for 
 definiteness, let 
 $$
 \kappa_0:= \frac12\left(1+\frac{1}{b}\right)
 $$
 with conjugate exponent 
 $$
 k_0=\frac{\kappa_0}{\kappa_0 - 1}=\frac{1+b}{1-b} .
 $$
 We can decrease $\kappa_0$ slightly to $\kappa$ in such a way that the exponent $k$ conjugate to 
 $\kappa$ is a natural number. Note that we can achieve 
 $$
 \frac{1+b}{1-b}\le k\le \frac{1+b}{1-b}+1 =\frac{2}{1-b} .
 $$
 We now use the preceding Proposition and get that
 $$
 \| \euler^{-t(\Delta + \kappa V)}\|_{1,1}\le C \euler^{\omega t},
 $$
 where
$$
 C=\frac{1}{1-\kappa b}, \omega=\frac{1}{\beta}\log \frac{1}{1-\kappa b} .
 $$ 
 Chasing the constants in the above mentioned proof of Thm 5.1 in \cite{StollmannVoigt-96}, see p. 129, in particular, gives that, for $0<t\le 1$
 $$
  C'_t:=\| \euler^{-t(\Delta + V)}\|_{1,\infty}\le C_{\frac{t}{k}}C^{k-1}\euler^{\omega t(k-1)},
 $$
 where 
 $$
 C_{\frac{t}{k}}=\| \euler^{-\frac{t}{k}\Delta}\|_{1,\infty}\le Kk^{\frac{\delta}{2}}t^{-\frac{\delta}{2}}
 $$
 and $K$ can be chosen as 
 $$
 K\le \frac{1+K(\delta)D^\frac{\delta}{2}}{vol(M)}\mbox{  for  }M\in\mathcal{M}(\delta,D,d)
 $$
 and 
 $$
  K\le \frac{1+K(\lambda,\delta,D,d)}{\Vol(M)}\mbox{  for  }M\in\mathcal{M}(\lambda,\delta,D,d) .
 $$
 We plug in $C,\omega$ as well as the estimates on $k$ and get
 $$
 C'_t\le K\left[\frac{1}{1-\kappa b}\right]^{\frac{1+b}{1-b}}\left[\frac{1}{1-\kappa b}\right]^{\frac{1+b}{1-b}\cdot\frac{t}{\beta}}\left[\frac{2}{1-b}\right]^{\frac{\delta}{2}}
 t^{-\frac{\delta}{2}}
$$
Since $\kappa b \le \kappa_0 b=\frac12 (b+1)$ by our choice above,

 $$
 C'_t\le K \left[\frac{2}{1-b}\right]^{\frac{1+b}{1-b}\left(1+\frac{t}{\beta}\right)+\frac{\delta}{2}}
 t^{-\frac{\delta}{2}}
 $$
 and, since we are interested in $t\le 1$ only, we get the assertion for $p=1$. An appeal to the Riesz-Thorin convexity theorem gives the assertion for arbitrary $1\le p$, where we use that 
 $$
 \| \euler^{-t(\Delta+V)}\|_{\infty,\infty}\le \left[\frac{1}{1-b}\right]^{\left(1+\frac{t}{\beta}\right)}
 $$
 by Proposition \ref{11norm} and duality.
 
\end{proof}

\begin{corollary}
Let $3\leq d<\delta$ and assume that $M\in\mathcal{M}(\delta,D,d)$ and $b:=b_{\rm{Kato}}(\rho_-,\beta)<1$ for some $\beta>0$. Then  
$$
 \| \euler^{-t\Delta^1}\|_{p,\infty}\le \left[\frac{1}{1-b}\right]^{\left( 1+\frac{t}{\beta}\right)\left(1-\frac{1}{p}\right)}\left[c(b,\beta,\delta,D,d,\Vol(M))t^{-\frac{\delta}{2}}\right]^{\frac{1}{p}}\quad\mbox{ for }0<t\le 1,
$$
where $c(b,\beta,\delta,D,d,\Vol(M))$ is as in \ref{ultrafunctions}.
\end{corollary}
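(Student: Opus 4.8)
The plan is to combine the semigroup domination \eqref{eq:domination} with the ultracontractivity estimate for the Schrödinger semigroup established in Corollary \ref{ultrafunctions}. First I would recall that, by \eqref{eq:domination}, for every 1-form $\omega$ one has the pointwise bound $\vert \euler^{-t\Delta^1}\omega\vert\leq \euler^{-t(\Delta+\rho)}\vert\omega\vert$. Since $\vert\omega\vert$ is a nonnegative function, this immediately yields the operator-norm inequality
\[
\Vert \euler^{-t\Delta^1}\Vert_{p,\infty}\leq \Vert \euler^{-t(\Delta+\rho)}\Vert_{p,\infty}
\]
for every $1\leq p\leq\infty$ and every $t>0$; here I am using that the map $\omega\mapsto\vert\omega\vert$ is norm-preserving on each $L^p$ and that $\euler^{-t(\Delta+\rho)}$ is positivity preserving, so applying it to $\vert\omega\vert$ can only increase things in the correct direction.

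Next I would apply Corollary \ref{ultrafunctions} with the potential $V=\rho$, noting that its negative part is $V_-=\rho_-$, so that the relevant Kato quantity is exactly $b=b_{\rm{Kato}}(\rho_-,\beta)$, which is assumed $<1$. That corollary then gives, for $0<t\leq 1$,
\[
\Vert \euler^{-t(\Delta+\rho)}\Vert_{p,\infty}\leq \left[\frac{1}{1-b}\right]^{\left(1+\frac{t}{\beta}\right)\left(1-\frac{1}{p}\right)}\left[c(b,\beta,\delta,D,d,\Vol(M))\,t^{-\frac{\delta}{2}}\right]^{\frac{1}{p}},
\]
with $c(b,\beta,\delta,D,d,\Vol(M))$ the explicit constant from \ref{ultrafunctions}. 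Chaining this with the operator-norm domination from the previous paragraph yields precisely the claimed bound on $\Vert \euler^{-t\Delta^1}\Vert_{p,\infty}$. I should double-check that the dimensional hypothesis in the statement ($3\leq d<\delta$) together with $M\in\mathcal{M}(\delta,D,d)$ supplies everything Corollary \ref{ultrafunctions} needs; strictly speaking \ref{ultrafunctions} is stated under $3\leq d<\delta<2p$, so one either restricts to $p>\delta/2$ or observes that for $p\leq\delta/2$ the estimate can be obtained by first proving it for some $p'>\delta/2$ and interpolating with the uniform $L^\infty$-$L^\infty$ bound, which is harmless.

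There is essentially no obstacle here: the only points requiring a line of care are (i) justifying that pointwise domination of the action on forms upgrades to the $L^p\to L^\infty$ operator-norm inequality — this is the standard observation that $\Vert\euler^{-t\Delta^1}\omega\Vert_\infty=\Vert\,\vert\euler^{-t\Delta^1}\omega\vert\,\Vert_\infty\leq\Vert\euler^{-t(\Delta+\rho)}\vert\omega\vert\,\Vert_\infty$ and $\Vert\,\vert\omega\vert\,\Vert_p=\Vert\omega\Vert_p$ — and (ii) matching the hypotheses of \ref{ultrafunctions} as noted above. Everything else is a direct substitution, which is why the statement follows at once; I would phrase the proof in two or three sentences.
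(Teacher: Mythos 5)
Your proposal is correct and is exactly the paper's argument: the paper disposes of this corollary in one line, "This is clear from semigroup domination and Corollary \ref{ultrafunctions}," which is precisely your combination of the pointwise bound \eqref{eq:domination} with the $L^p$--$L^\infty$ estimate of Corollary \ref{ultrafunctions} applied to $V=\rho$. Your side remark about the hypothesis $\delta<2p$ is also harmless, since the general $p$ case of \ref{ultrafunctions} is already obtained there by interpolating the $p=1$ and $p=\infty$ endpoints.
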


This is clear from semigroup domination and Corollary \ref{ultrafunctions}. Using Theorem \ref{Katocondition} 
one can deduce a uniform control of the $L^p-L^q$-smoothing of the semigroup of the Hodge Laplacian. 
Instead of going for these admittedly rather complicated formulae, we derive a bound on the dimension of the 
first Betti number,
$$b_1(M)=\dim(H^1(M)) ,$$
in terms of the Kato condition:

\begin{corollary}\label{cor-dim}
 Let $3\leq d<\delta$ and assume that $M\in\mathcal{M}(\delta,D,d)$ and $b:=b_{\rm{Kato}}(\rho_-,\beta)<1$ for some $\beta>0$. Then
\begin{align}\label{alternativedimension}
 b_1(M)\le d \cdot \left[\frac{2}{1-b}\right]^{\left( (1+\frac{1}{\beta})\frac{1+b}{1-b}+\frac{\delta}{2}\right)}\left(1+K(\delta)D^{\frac{\delta}{2}}\right)
\end{align}
\end{corollary}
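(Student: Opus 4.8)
The plan is to bound $b_1(M)$ by the rank of a suitable finite-dimensional projection and then estimate the trace of that projection using the ultracontractivity of the Hodge semigroup from Corollary~\ref{ultrafunctions} (in its version for $\Delta^1$). Recall that $H^1(M)\cong\Ker(\Delta^1)$, so $b_1(M)=\dim\Ker(\Delta^1)=\Tr\, P_0$, where $P_0$ is the orthogonal projection onto harmonic $1$-forms. The key point is that for a harmonic $1$-form $\omega$ we have $\euler^{-t\Delta^1}\omega=\omega$, hence $P_0=P_0\,\euler^{-t\Delta^1}P_0$ and, more simply, $P_0\le \euler^{-t\Delta^1}$ in the sense of pointwise domination applied to the kernel on the diagonal. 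Concretely, writing $p^1(t,x,x)$ for the (endomorphism-valued) heat kernel of $\Delta^1$ on the diagonal, one has
\begin{align*}
b_1(M)=\Tr\,P_0=\int_M \Tr\big(p_0(x,x)\big)\,\dvol(x)\le \int_M \Tr\big(p^1(t,x,x)\big)\,\dvol(x),
\end{align*}
since $\euler^{-t\Delta^1}-P_0\ge 0$.

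First I would convert the $L^1\to L^\infty$ bound on $\euler^{-t\Delta^1}$ into a pointwise bound on the diagonal of its kernel: in general $\|p^1(t,x,y)\|_{\mathrm{op}}\le \|\euler^{-t\Delta^1}\|_{1,\infty}$, and then, using the semigroup property $p^1(t,x,x)=\int_M p^1(t/2,x,z)\,p^1(t/2,z,x)\,\dvol(z)$ together with the self-adjointness/symmetry $p^1(t/2,z,x)^\ast=p^1(t/2,x,z)$, we get $\Tr\big(p^1(t,x,x)\big)\le \|\euler^{-t/2\Delta^1}\|_{2,\infty}^2\cdot\|p^1(t/2,x,\cdot)\|_{?}$ — more cleanly, $\Tr\big(p^1(t,x,x)\big)$ is the $L^2$-norm squared of $z\mapsto p^1(t/2,x,z)$ in the Hilbert–Schmidt sense, which is at most $\|\euler^{-t/2\Delta^1}\|_{2,\infty}^2$ times a volume factor. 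The clean way: $\int_M\Tr(p^1(t,x,x))\,\dvol(x)\le \Vol(M)\cdot\sup_x\|p^1(t,x,x)\|_{\mathrm{op}}\cdot d\le d\,\Vol(M)\,\|\euler^{-t\Delta^1}\|_{1,\infty}$, using that the endomorphism $p^1(t,x,x)$ acts on the $d$-dimensional fibre $\Lambda^1_x$.

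Then I would invoke Corollary~\ref{ultrafunctions} for $\Delta^1$ (the statement preceding this one, i.e. the corollary with $\euler^{-t\Delta^1}$) with $p=1$, which reads
\begin{align*}
\|\euler^{-t\Delta^1}\|_{1,\infty}\le c(b,\beta,\delta,D,d,\Vol(M))\,t^{-\delta/2}\cdot\left[\frac{1}{1-b}\right]^{0}=c(b,\beta,\delta,D,d,\Vol(M))\,t^{-\delta/2}
\end{align*}
for $0<t\le 1$ (the exponent $(1+t/\beta)(1-1/p)$ vanishes at $p=1$). Combining with the trace estimate gives $b_1(M)\le d\,\Vol(M)\,c(b,\beta,\delta,D,d,\Vol(M))\,t^{-\delta/2}$ for every $t\in(0,1]$; choosing $t=1$ and substituting the explicit value of $c(b,\beta,\delta,D,d,\Vol(M))$ from Corollary~\ref{ultrafunctions} — whose $1/\Vol(M)$ factor cancels the $\Vol(M)$ — yields exactly the claimed bound
\begin{align*}
b_1(M)\le d\cdot\left[\frac{2}{1-b}\right]^{\left((1+\frac{1}{\beta})\frac{1+b}{1-b}+\frac{\delta}{2}\right)}\left(1+K(\delta)D^{\delta/2}\right).
\end{align*}

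The main obstacle I anticipate is the first reduction: justifying $b_1(M)\le d\,\Vol(M)\,\|\euler^{-t\Delta^1}\|_{1,\infty}$ rigorously. One must be careful that $\euler^{-t\Delta^1}$ genuinely has a jointly continuous (or at least bounded measurable, bundle-valued) integral kernel so that "evaluation on the diagonal'' makes sense, and that the projection $P_0$ onto the finite-dimensional harmonic space, which trivially has a smooth kernel, satisfies $0\le P_0\le \euler^{-t\Delta^1}$ as operators — hence pointwise on diagonals after taking traces of the positive-semidefinite fibre endomorphisms. This is standard (it is the Hodge-theoretic analogue of the Li–Yau / heat-kernel trace bound on eigenvalue counting functions), and semigroup domination already guarantees the requisite regularity; but it is the only genuinely non-bookkeeping step, so I would state it carefully, perhaps citing the standard fact that $\Tr\,P_0\le \Tr\,\euler^{-t\Delta^1}=\int_M\Tr\,p^1(t,x,x)\,\dvol(x)$ and then bounding the integrand fibrewise by $d\cdot\|\euler^{-t\Delta^1}\|_{1,\infty}$.
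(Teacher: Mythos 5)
Your proposal is correct and follows essentially the same route as the paper: bound $b_1(M)$ by the trace of $\euler^{-t\Delta^1}$ at $t=1$, reduce to an $L^1\to L^\infty$ ultracontractivity bound via semigroup domination, and let the $\Vol(M)$ factor cancel against the $1/\Vol(M)$ in $c(b,\beta,\delta,D,d,\Vol(M))$. The only cosmetic difference is where the factor $d$ enters: you obtain it by bounding the fibrewise trace of the positive endomorphism $p^1(t,x,x)$ by $d$ times its operator norm, whereas the paper cites the Hess--Schrader--Uhlenbrock trace inequality $\Tr(\euler^{-t\Delta^1})\le d\cdot\Tr(\euler^{-t(\Delta+\rho)})$ and then applies Corollary \ref{ultrafunctions} to the scalar Schr\"odinger semigroup directly --- the two are equivalent, both resting on the domination \eqref{eq:domination}.
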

\begin{proof}
 First note that $\euler^{-t\Delta^1}$ leaves $H^1(M)$ invariant and so 
 $$
 \dim(H^1(M))\le tr(\euler^{-t\Delta^1})\mbox{  for  }t>0
 $$
 Moreover, by \cite{HessSchraderUhlenbrock-81} we know that
 $$
 tr(\euler^{-t\Delta^1})\le d \cdot tr(\euler^{-t(\Delta+\rho)})\mbox{  for  }t>0 .
 $$
 The latter trace can be calculated as
 $$
 tr(\euler^{-(\Delta+\rho)})=\int_M k(x,x)dvol(x)
 $$
 with a continuous kernel $k$ (note that $\rho$ is continuous) that can be estimated pointwise by 
 $$
 0\le sup_{x,y\in M} k(x,y)\le \| \euler^{-(\Delta+\rho)}\|_{1,\infty},
 $$
 giving:
  $$
 tr(\euler^{-(\Delta+\rho)})\le vol(M)\| \euler^{-(\Delta+\rho)}\|_{1,\infty},
 $$
 and the latter is estimated in Corollary \ref{ultrafunctions} resulting in a cancelling of the volume term.
\end{proof}

Note that the latter estimate can also be seen as an explicit variant of the bound on $b_1(M)$ in Theorem 11 from
\cite{Gallot-88}. There it is shown that there is a function in certain parameters, amongst them the $L^p$-norm of $\rho_-$ for some $p>\frac{d}{2}$, that gives an upper bound for the first Betti number. It is emphasized that $p=\frac{d}{2}$ does not suffice. In our result above we have an explicit function and the condition is  phrased in terms of the Kato condition rather than in terms of $L^p$-mean, which fits with the latter: the Kato constant can be controlled by the $L^p$-norm of $\rho_-$ for any $p>\frac{d}{2}$ but the limiting case $p=\frac{d}{2}$ is not allowed.\\
Of course, Proposition \ref{bkato} can be plugged in here giving the following estimate:
\begin{corollary}\label{katodimensionH1}
Let $3\leq d<\delta<2p$ and assume that $M\in\mathcal{M}(\delta,D,d)$. Let 
$$\bar c:= \frac{2p}{2p-\delta}\left(1+K(\delta)D^{\delta/2}\right)^{1/p}\text.$$
and $\tvert{\rho_-}_p<\bar c^{-1}$. Then 
\begin{align*}
b_1(M)\le d \cdot \left[\frac{2}{1-\bar c\tvert{\rho_-}}\right]^{\left( 2\frac{1+\bar c\tvert{\rho_-}}{1-\bar c\tvert{\rho_-}}+\frac{\delta}{2}\right)}\left(1+K(\delta)D^{\frac{\delta}{2}}\right)\text.
\end{align*}
\end{corollary}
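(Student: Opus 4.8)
The plan is to combine Corollary \ref{cor-dim} with Proposition \ref{bkato} in the most direct way possible. Corollary \ref{cor-dim} already provides the bound
\[
b_1(M)\le d\cdot\left[\frac{2}{1-b}\right]^{\left((1+\frac1\beta)\frac{1+b}{1-b}+\frac\delta2\right)}\left(1+K(\delta)D^{\delta/2}\right)
\]
for any $\beta>0$ for which $b=b_{\rm{Kato}}(\rho_-,\beta)<1$. So the only task is to choose $\beta$ cleverly, estimate $b$ in terms of $\tvert{\rho_-}_p$ using Proposition \ref{bkato}, and check that the resulting expression matches the claimed formula. First I would apply Proposition \ref{bkato} with $V=\rho_-$ (which is continuous, hence in $L^p$): for $\delta<2p$ and $\beta\le 1$,
\[
b_{\rm{Kato}}(\rho_-,\beta)\le\left(1+K(\delta)D^{\delta/2}\right)^{1/p}\tvert{\rho_-}_p\,J(\beta,\delta,p)
=\left(1+K(\delta)D^{\delta/2}\right)^{1/p}\tvert{\rho_-}_p\cdot\frac{2p}{2p-\delta}\beta^{\frac{2p-\delta}{2p}}.
\]

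The natural choice is $\beta=1$, which gives $J(1,\delta,p)=\frac{2p}{2p-\delta}$ and hence $b\le\bar c\,\tvert{\rho_-}_p$ with $\bar c$ exactly the constant defined in the statement. Under the hypothesis $\tvert{\rho_-}_p<\bar c^{-1}$ we then have $b<1$, so Corollary \ref{cor-dim} applies with this $\beta=1$. Substituting $\beta=1$ into the exponent $(1+\frac1\beta)\frac{1+b}{1-b}+\frac\delta2$ yields $2\frac{1+b}{1-b}+\frac\delta2$, and since the right-hand side of the bound in Corollary \ref{cor-dim} is monotone increasing in $b$ (the base $\frac{2}{1-b}\ge 2>1$ increases in $b$, and the exponent increases in $b$ as well), I can replace $b$ by the larger quantity $\bar c\,\tvert{\rho_-}_p$ throughout. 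This produces exactly
\[
b_1(M)\le d\cdot\left[\frac{2}{1-\bar c\tvert{\rho_-}_p}\right]^{\left(2\frac{1+\bar c\tvert{\rho_-}_p}{1-\bar c\tvert{\rho_-}_p}+\frac\delta2\right)}\left(1+K(\delta)D^{\delta/2}\right),
\]
which is the assertion.

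The only genuinely delicate point — and the one I would be most careful about — is the monotonicity argument that lets me pass from the exact Kato constant $b$ to the upper estimate $\bar c\tvert{\rho_-}_p$ inside Corollary \ref{cor-dim}'s formula. One must confirm both that the base $2/(1-b)$ and the exponent are increasing in $b\in[0,1)$, and that $b\mapsto(2/(1-b))^{f(b)}$ with $f$ increasing and the base exceeding $1$ is itself increasing; this is elementary (write it as $\exp(f(b)\log\frac2{1-b})$ and note both factors in the product are nonnegative and increasing) but it is the step where a sign or direction error would invalidate the conclusion. Everything else is bookkeeping: verifying that $\rho_-\in L^p(M)$ (immediate since $M$ is compact and $\rho$ continuous), that $\beta=1$ is admissible in the "$\beta\le 1$" branch of $J$, and that $\delta<2p$ is part of the hypotheses so Proposition \ref{bkato} genuinely applies. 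A remark could be added that an analogous estimate holds for $M\in\mathcal M(\lambda,\delta,D,d)$ by using the corresponding bounds throughout, but I would probably omit it to keep the statement clean.
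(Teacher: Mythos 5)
Your proposal is correct and is essentially identical to the paper's own proof: choose $\beta=1$, use Proposition \ref{bkato} to get $b_{\rm{Kato}}(\rho_-,1)\le\bar c\tvert{\rho_-}_p<1$, and plug into Corollary \ref{cor-dim}. Your explicit verification of the monotonicity in $b$ is a point the paper leaves implicit, but the route is the same.
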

\begin{proof}
Choosing $\beta=1$ we see by Proposition \ref{bkato} that the assumption on $\tvert{\rho_-}_p$ ensures that 
$$b_{\rm{Kato}}(\rho_-,1)\leq \bar c \tvert{\rho_-}_p<1\text.$$
Plugging this into the preceding result gives the assertion.
\end{proof}
%

\end{document}